\documentclass[12pt,reqno]{amsart}

\usepackage[colorlinks=false]{hyperref}
\usepackage[dvips]{graphicx}
\usepackage[usenames,dvipsnames]{color}
\usepackage{epsfig}
\usepackage{rotating}
\usepackage{amssymb}
\usepackage{amsmath,amsfonts}
\usepackage[latin1]{inputenc}
\usepackage{verbatim}
\usepackage[tableposition=top,font=small,labelfont=bf,format=hang]{caption}
\usepackage{booktabs}

\newtheorem{lem}{Lemma}
\newtheorem{thm}{Theorem}
\newtheorem{prop}{Proposition}

\newtheorem{rem}{Remark}
\newtheorem{coro}{Corollary}
\newtheorem{ex}{Example}

\newcommand{\F}{\mathbb{F}}
\newcommand{\Z}{\mathbb{Z}}
\newcommand{\Q}{\mathbb{Q}}

\DeclareMathOperator{\wt}{wt}
\renewcommand{\vec}[1]{\boldsymbol{#1}} 

\title{ Second order Recurrences, quadratic number fields and cyclic codes}
\thanks{The work of Minjia Shi was supported in part by the National Natural Science Foundation of China under Grant 12471490.
The work of Jon-Lark Kim was supported in part by the BK21 FOUR (Fostering Outstanding Universities for Research) funded by the Ministry of
Education(MOE, Korea) and National Research Foundation of Korea(NRF)
under Grant No. 4120240415042 and by Basic Science Research Program through the National Research Foundation of Korea(NRF) funded by the Ministry of Science and ICT under Grant No. RS-2025-24534992.}
\author{Minjia Shi}
\address{Key Laboratory of Intelligent Computing Signal Processing, Ministry of Education,
	School of Mathematical Sciences, Anhui University, Hefei 230601, China}
\email{smjwcl.good@163.com}
\author{Xuan Wang}
\address{Key Laboratory of Intelligent Computing Signal Processing, Ministry of Education,
	School of Mathematical Sciences, Anhui University, Hefei 230601, China}
\email{wang\_xuan\_ah@163.com}
\author{Bouazzaoui Zakariae}
\address{\'Ecole Sup\'erieure de l'Education et de la Formation. Laboratoire de mathématiques et applications FSO-Maroc}
\email{z.bouazzaoui@ump.ac.ma}
\author{Jon-Lark Kim}
\address{Department of Mathematics and Institute for Mathematical and Data Sciences, Sogang University, Seoul, South Korea}
\email{jlkim@sogang.ac.kr}
\author{Patrick Sol\'e}
\address{I2M, Aix Marseille Univ, CNRS,  Marseille, France}
\email{patrick.sole@telecom-paris.fr}

\date{}

\begin{document}
\maketitle
\begin{abstract}
 Wall-Sun-Sun primes (shortly WSS primes) are defined as those primes $p$  such that the period of the Fibonacci recurrence is the same modulo
 $p$ and modulo $p^2.$ This concept has been generalized recently to certain second order recurrences whose characteristic polynomials admit as a zero the principal unit of $\Q(\sqrt{d}),$
 for some integer $d>0.$ Primes of the latter type we call $WSS(d).$ They correspond to the case when $\Q(\sqrt{d})$ is not $p$-rational. For such a prime $p$
 we study the weight distributions of the cyclic codes over $\F_p$ and $\Z_{p^2}$ whose
 check polynomial is the reciprocal of the said characteristic polynomial. Some of these codes are MDS (reducible case) or NMDS (irreducible case).
\end{abstract}

\noindent {\bf Keywords:} WSS primes, quadratic number fields, $p$-rational number fields, second order recurrences, cyclic codes, MDS codes, NMDS codes\\
{\bf MSC (2020):} 11B39, 11B50, 11R11, 94B15
\section{Introduction}

The famous Fibonacci sequence is defined by the linear recurrence
$$F_{n}=F_{n-1}+F_{n-2},\; n\ge 2,$$
with initial conditions $F_0=0,\,F_1=1.$
This is a periodic sequence modulo every positive integer $m$, its period $\pi(m)$ is called the Pisano period.
Formulas and properties of $\pi(m)$ are well known since Wall's paper in $1960$ \cite{W}. That paper renewed interest in the subject
and motivated studies of Pisano periods, which were later related to other objects and generalized to other classes of second order recurrence
sequences. A natural question was asked by Wall in his paper: Can there be a prime $p$ such that $\pi(p)=\pi(p^2)?$
It is known that up to $10^{14}$, there are no such primes (cf. \cite{MR}). Still, using heuristics and probabilistic arguments, some authors conjecture the existence of infinitely many primes $p$ satisfying $\pi(p)=\pi(p^2)$ \cite{EJ,Kl}. It was shown by the Sun brothers in \cite{SS} that the existence of these primes is related to Fermat's Last Theorem, the reason why we call them Wall-Sun-Sun primes, or WSS primes for short.\\
The question of Wall can naturally be asked for other types of second order recurrence sequences. In \cite{B,B2}, this was considered for generalized Fibonacci sequences of the form $$F_{n}=-AF_{n-1}-BF_{n-2},\; n\ge 2,$$
whose characteristic polynomial is $X^2+AX+B$, with $A,B \in \Z$ being associated with a given real quadratic field. The question of Wall for these sequences is related to certain deep arithmetic properties of real quadratic fields. More precisely, let $d>0$ be a fundamental discriminant and let $\varepsilon_d$ be the fundamental unit of $\Q(\sqrt{d})$. Then $\varepsilon_d =\frac{a+b\sqrt{d}}{2}$ when $(a,b)$ represent the smallest solution of the Pell equation $x^2-dy^2=\pm4$. The sequence in \cite{B,B2} is then given by $$F_{n+2}=aF_{n+1}-\frac{a^2-db^2}{4}F_n,$$ where $X^2-aX+\frac{a^2-db^2}{4}$ is the minimal polynomial of $\varepsilon_d$. Write $k(m)$ for the period of this sequence modulo $m>1$ and say that a prime $p$ is a $WSS(d)$ prime if we have $k(p^2)=k(p)$. Then for a large enough prime $p$, we have $k(p^2)=k(p)$ exactly when the quadratic field $\Q(\sqrt{d})$ is $p$-rational \cite{B,B2} (see Section \ref{p-rational fields} below for details about $p$-rational fields). In this setting, for the classical Fibonacci sequence, that is $d=5$, there are no known $WSS(5)$ primes. However, for other values $d>5$, one can find many $WSS(d)$ primes \cite[Table]{B}.
\vspace{1em}

To allow more generality, we shall say that a prime is $WSS(d)^*$ if the characteristic polynomial in a general form discussed above has discriminant $d$ up to square factors, but not necessarily $\varepsilon_d$ as a zero.
With every second order recurrence, can be attached a cyclic code of dimension two, whose codewords are the period window of the sequence when the initial conditions vary.
The weight distribution of cyclic codes of dimension two is studied over finite fields in \cite{SZS,V1,V2} and over rings in \cite{SHS,SLS}. The methods in \cite{SHS,SLS,SZS}
are based on a local Binet-like formula; while the technique in \cite{V1,V2} is a more classical coding theoretic approach.
Their salient property is that they are one-weight code or
two-weight codes, a fact which ties them up with strongly regular graphs in the projective case \cite{BM}. Some of our codes are MDS (reducible case) or  near MDS (shortly NMDS) (irreducible case). NMDS codes is an important class of codes introduced in \cite{DL} that resembles MDS codes.
For some primes $p,$ we construct in Proposition 5,  NMDS $[2p+2,2,2p]$ codes, that are optimal for the length.

The material is organized as follows. The next section contains prerequisite notions. Section 3 develops the connection between WSS primes and quadratic number fields. Section 4 shows that, given a prime $p>0$, there is an infinitude of integers $d >0$ such that $p$ is $WSS(d)^*.$ Section $5$ considers the cyclic codes over $\F_p$ and $\Z_{p^2}$ of length the common period when $p$ is $WSS(d).$ Section $6$ concludes the article.
\section{Preliminary}
\subsection{Codes}
We assume that the reader has some basic familiarity with Coding Theory as can be gained from the first chapter of \cite{MacSlo} or \cite{HuffmanPless}.

The {\bf Hamming weight} of a vector $\vec{u}$ over a finite field $F$ and of length $n$ is the number of all nonzero coordinates of $\vec{u}$, denoted by $\wt(\vec{u})$.
A {\bf code} of length $n$ over a finite field $F$ is a vector subspace of $F^n.$
Its elements are called {\bf codewords}.
Its {\bf dimension} is denoted by $\kappa,$ and its minimum weight or {\bf minimum distance} by $d.$
The three parameters are written compactly as $[n,\kappa,d]$ or $[n,\kappa,d]_q$ if $F=\mathbb F_q$, the finite field of order $q,$ where $q=p^r$ ($p$ a prime and $r\ge1$).
An $[n,\kappa,n-\kappa+1]$ is called {\bf MDS}.
An $[n,\kappa,n-\kappa]$ is called almost-MDS, simply {\bf AMDS}
and near-MDS, simply {\bf NMDS} if its dual is also AMDS.
A {\bf generator matrix} $G$ of the code is any $\kappa$ by $n$ matrix whose rows span the code.

A code is {\bf projective} if any pair of columns of $G$ is linearly independent.
A code is {\bf two weight} if it has only two nonzero weights $w_1,\,w_2$ with $w_1 < w_2$.  This is denoted
by $[n,k,\{w_1,\,w_2\}].$ It was shown by Delsarte that if the code is projective then $w_2-w_1$ is a power of the
charcteristic $p$ of $F.$
If  $m>1$ is the largest factor of $w_2-w_1$ that is coprime to $p$,
 then by \cite{K} the code is an $m$-fold repetition of a projective code. (See \cite{K} for a precise definition of the $m$-fold repetition).

Considering codes over a finite commutative ring $R$ we call linear codes of length $n$ over $R$ the
submodules of $R^n.$ We use the notation $(n,M,d)$ for codes with $M$ the size and $d$ the minimum distance of the code.

In the present paper we will use $F=\F_p$ and $R=\Z_{p^2}$ for some prime $p.$
The number of codewords of weight $w$ is denoted by $A_w$ both in the field and ring case. The sequence $(A_w)_{w=0}^n$ is called the {\bf weight distribution} of the code.

\subsection{Cyclic codes}

A {\bf cyclic code} of length $n$
over a finite field $F$ is an ideal of the polynomial ring $F[x]/(x^n-1).$ It can be shown that this ideal is of the form $(g),$ for some monic divisor $g$ of $x^n-1.$ The polynomial $g$ is called the
{\bf generator polynomial}  and the  {\bf check polynomial} is defined as $\frac{x^n-1}{g}.$
Similarly
a cyclic code of length $n$
over a finite ring $R$ is an ideal of the polynomial ring $R[x]/(x^n-1).$ For a free code we define similarly its generator and check polynomials.
Given a second order recurrence $$F_{n}=-AF_{n-1}-BF_{n-2},\; n\ge 2,$$ of period $T$
over $R$ there is a $[T,2]$ cyclic code $C$ that can be constructed from it with codewords
$$ (F_0,\dots,F_{T-1})$$
and initial conditions arbitrary. Thus
$$C=\{(F_0,\dots,F_{T-1}) \mid F_0,F_1 \in R\}.$$
It can be shown that the reciprocal polynomial of the characteristic polynomial of the recurrence is, up to normalization, the check polynomial of $C.$

\subsection{Basic Number theory} \label{sub-number-theory}
The {\bf Legendre symbol} $(\frac{.}{p})$ is defined for an odd prime $p$ by the relation
$$\left(\frac{x}{p}\right)=
                \begin{cases}
      1, & \text{if}\ x \in Q \\
      0 ,& \text{if}\ x=0\\
      -1, & \text{otherwise}
    \end{cases}
 $$
 where $Q$ is the set of quadratic residues modulo $p$ defined by
 $$Q=\{x^2 \mid x \in \F_p^\times\}.$$

 In particular a quadratic $t^2+ut+v$ is reducible over $\F_p,$ for some odd prime $p$  if and only if its {\bf discriminant}
 $\Delta_p=u^2-4v^2 \in \F_p$ is such that $(\frac{\Delta_p}{p})\in \{0,1\}.$\\ \ \

 A {\bf number field} is a finite extension of the rationals. It is {\bf quadratic} if the degree of the extension is two. It is {\bf real quadratic} if it is of the form $\mathbb{Q}(\sqrt{d})$ for some square-free integer $d>1.$ The {\bf class number} of a number field $F$ is the order of its class group; that is the group of fractional ideals of $F$ modulo principal ideals. A field extension $F/K$ is said to be Galois if it is normal (every irreducible polynomial over $K$ that has one root in $F$ splits completely in $F$) and separable (all extensions of ${\Q}$ are separable). The Galois group of $F/L$ is the group of automorphisms of $F$ that fixes $K$ pointwise.

\section{Real quadratic $p$-rational fields}\label{p-rational fields}

In this section, we briefly introduced $p$-rational real quadratic fields, where $p$ is an odd prime number, and their precise connection to periods of Fibonacci numbers \cite{B,B2}.

Let $F$ be a number field and let $p$ be a rational prime. A $p$-adic prime $\mathfrak{p}$ of $F$ is an ideal of the ring of integers of $F$ above the prime $p$. Let $S_p$ be the set of $p$-adic primes of $F$. A $p$-extension $K$ of $F$ is an extension of $F$ of degree a $p$-power, and it is called unramified outside $S_p$ if the only possible prime ideals that are ramified in $K/F$ are those in $S_p$. The compositum $F_{S_p}$ of all such $p$-extensions is called the maximal pro-$p$-extension of $F$ that is unramified outside $S_p$. Its Galois group $\Gamma_F$ is a pro-$p$-group; a projective limit of finite $p$-groups \cite[Section 1.4]{Se}.

Let $\Phi(\Gamma_F)$ denote the Frattini subgroup
of $\Gamma_F$, which is defined to be the intersection of all closed subgroups of index $p$.
The Frattini quotient of $\Gamma_F$ is the quotient group $\tilde{\Gamma}_{F}:=\Gamma_F/\Phi(\Gamma_F)$.
This is an abelian
group of exponent $p$, which we regard as a vector space over the field ${\F}_p$. A number field $F$ is called $p$-rational if
the ${\F}_p$-dimension of $\Tilde{\Gamma}_{F}$ is exactly $r_2(F)+1$, where $r_2(F)$ is the number of complex embeddings of $F$, see \cite[Theorem and Definition 2.1]{Movahhedi-Nguyen} for several equivalent formulations of the $p$-rationality.


The structure of the Galois group $\Gamma_F$ reflects arithmetic properties of the base field $F$ at the prime $p$.
More precisely, let $\Gamma_{F}^{ab}$ be the maximal abelian quotient of $\Gamma_F$; that is the Galois group of the maximal
abelian extension contained in $F_{S_p}$. We have the following isomorphism: $$\Gamma_{F}^{ab}\cong\mathbb{Z}_{p}^{1+r_2(F)+\delta_p}\times T_F,$$
where $\delta_p$ is Leopoldt's defect (\cite[Corollary 10.3.7]{NSW}), $T_F$ is a finite $p$-group representing the $\mathbb{Z}_p$-torsion and ${\Z_p}$ is the ring of $p$-adic integers.
It follows that $F$ is $p$-rational
if and only if $\delta_p=0$ and $T_F$ is trivial, where the vanishing of $\delta_p$ means that Leopoldt's conjecture is true \cite[Section 10.3]{NSW}.
When the number field $F$ is totally real, Leopoldt's conjecture for $F$ at $p$ is expressed by the non vanishing
of the $p$-adic regulator $R_p(F)$ (c.f. \cite[10.3.4 and 10.3.5]{NSW}). Furthermore,
a formula linking the order of $T_F$ with $R_p(F)$ was
proved by Coates in \cite[Lemma 8]{Co}. When $F$ is a real quadratic field, this formula reads
$$|T_F|\sim_p h_F R_p(F)/p^{1/e},$$
where $\sim_p$ means that both sides have the same $p$-adic valuation, $h_F$ is the class number of $F$ and $e := 2$ or $1$ according as $p$ ramifies in $F$ or not.
If $\varepsilon$ is the fundamental unit of $F$, then we have
$R_p(F)=\log_p(\varepsilon)$, where $\log_p$ is the $p$-adic logarithm. We use the fact that the $p$-adic valuation of $R_p(F)$ is the same as that of $(\varepsilon^{Nv-1}- 1)$, where $v$ is a $p$-adic prime of $F$ and $Nv$ is its absolute norm, to relate the $p$-rationality of $F$ in an elementary way to the divisibility by $p^2$ of certain associated Fibonacci numbers. The starting point is the following characterization:


\begin{prop}(\cite[Proposition 4.1]{Greenberg})
	Let $F$ be a real quadratic number field. Suppose either $p\geq 5$ or $p =3$, and is unramified in
	$F$. Then $F$ is $p$-rational precisely when the following two conditions are satisfied:
	\begin{enumerate}
		\item[(i)] The prime $p$ does not divide the class number $h_F$,
		\item[(ii)] The fundamental unit of $F$ is not a $p$-th power in the completion $F_v$ of $F$ at a p-adic place $v$.
	\end{enumerate}
\end{prop}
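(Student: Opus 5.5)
Since $F$ is totally real, I would begin from the decomposition $\Gamma_F^{ab}\cong\Z_p^{1+r_2(F)+\delta_p}\times T_F$ recalled above. The Frattini quotient $\tilde\Gamma_F$ has $\F_p$-dimension $1+r_2(F)+\delta_p+\dim_{\F_p}(T_F/pT_F)$, because $\Gamma_F$ is a pro-$p$ group and its Frattini quotient equals that of $\Gamma_F^{ab}$. For a real quadratic field $r_2(F)=0$. So $F$ is $p$-rational exactly when $\delta_p=0$ and $T_F=1$. Leopoldt's conjecture is known for abelian fields (Brumer), so $\delta_p=0$ automatically, and everything reduces to showing $T_F=1\iff$ (i) and (ii).

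For this I would use Coates' formula $|T_F|\sim_p h_F R_p(F)/p^{1/e}$. Under the hypotheses ($p\ge5$, or $p=3$ unramified) we have $e=1$ except when $p\ge5$ ramifies. So $T_F$ is trivial iff $v_p(h_F)+v_p(R_p(F))=1/e$. Here $R_p(F)=\log_p\varepsilon$ and $v_p(h_F)\ge 0$. The key claim is that $v_p(\log_p\varepsilon)\ge 1/e$ always, with equality iff $\varepsilon$ is not a $p$-th power in $F_v$. Granting this, $T_F=1$ forces $v_p(h_F)=0$, which is (i), and $v_p(\log_p\varepsilon)=1/e$, which is (ii). Conversely, (i) and (ii) give $v_p|T_F|=0$.

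To prove the claim, set $u=\varepsilon^{Nv-1}$, a principal unit of $F_v$; multiplying by the prime-to-$p$ integer $Nv-1$ does not change valuations or $p$-th power status. For $p\ge5$ (or $p=3$ unramified) the absolute ramification index $e$ of $F_v$ satisfies $e<p-1$. Hence $\log_p$ is an isometric isomorphism from $1+\mathfrak m_v$ onto $\mathfrak m_v$, with $v_p(\log_p u)=v_p(u-1)$, and it carries $(1+\mathfrak m_v)^p$ onto $p\,\mathfrak m_v$. So $u$ is a $p$-th power iff $v_p(u-1)\ge 1+1/e$. Since $v_p(u-1)$ lies in $\frac1e\Z_{\ge 1}$, $u$ fails to be a $p$-th power iff $v_p(u-1)=1/e$. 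Also $\varepsilon$ is a $p$-th power in $F_v$ iff $u$ is: the torsion of $F_v^\times$ has order prime to $p$ when $e<p-1$, and $Nv-1$ is prime to $p$. When $p$ splits, I would need to check that the two places give the same answer. This follows by Galois conjugation, since $\varepsilon'=\pm\varepsilon^{-1}$.

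The main obstacle is bookkeeping the normalization in Coates' formula in the ramified case. There $R_p$ and the index $e$ must be matched consistently with the valuation on $F_v$, and the factor $p^{1/e}$ must cancel exactly against the minimal valuation of $\log_p$ on principal units. I would verify this by comparing with the split case, where $F_v=\Q_p$ and the statement reduces to $v_p(\varepsilon^{p-1}-1)=1$.
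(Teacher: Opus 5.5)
\textbf{Gap in the ramified case.} Your overall route matches the one the paper's discussion around this citation points to: Leopoldt, Coates' formula $|T_F|\sim_p h_F R_p(F)/p^{1/e}$, and $v_p(\log_p\varepsilon)=v_p(\varepsilon^{Nv-1}-1)$. The reduction to $T_F=1$ is fine, and the unramified cases ($e=1$) go through as you wrote them. The problem is the ramified case $e=2$ ($p\ge 5$, $p\mid d$).

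There you correctly obtain $(1+\mathfrak{m}_v)^p=1+\mathfrak{m}_v^{3}$, so $u$ is a $p$-th power iff $v_p(u-1)\ge 3/2$. But your next inference is false: ``since $v_p(u-1)\in\frac1e\Z_{\ge 1}$, $u$ fails to be a $p$-th power iff $v_p(u-1)=1/e$.'' The set $\frac12\Z_{\ge1}$ contains $1$, which lies strictly between $1/2$ and $3/2$. For example, $1+p$ is a principal unit of $F_v$ that is not a $p$-th power and has $v_p(\log_p(1+p))=1$.

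If $u=\varepsilon^{p-1}$ could be of this type, then (i) and (ii) could both hold while $v_p(h_F)+v_p(\log_p\varepsilon)\ge 1>1/2$. So as written, your argument does not prove (i)$+$(ii)$\Rightarrow$ $p$-rational when $p$ ramifies. This is not the normalization issue in Coates' formula that you flagged. Comparing with the split case cannot detect it either, since there $e=1$.

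\textbf{The missing input.} You need a global fact that rules out $v_p(u-1)=1$. The nontrivial automorphism $\sigma$ of $F_v=\Q_p(\sqrt d)$ restricts to conjugation on $F$, so
\[
\sigma(u)=\overline{\varepsilon}^{\,p-1}=(\pm\varepsilon^{-1})^{p-1}=u^{-1}.
\]
Since $\log_p$ commutes with $\sigma$, $\log_p u$ is $\sigma$-anti-invariant and therefore lies in $\sqrt d\,\Q_p$. Because $v_p(\sqrt d)=1/2$, its valuation lies in $\frac12+\Z$. Hence $v_p(u-1)=1$ is impossible, and your claim ``not a $p$-th power iff $v_p(\log_p\varepsilon)=1/e$'' holds.

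Alternatively, stay within the black box you are already using. Since $v_p|T_F|$ and $v_p(h_F)$ are integers, Coates' formula itself forces $v_p(\log_p\varepsilon)=v_p|T_F|-v_p(h_F)+\frac1e\in\frac1e+\Z$. With either of these added, your proof is complete.
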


The class number $h_F$ being finite, the most important property to study in the real quadratic case is the behavior of the fundamental unit
locally at $p$. Computations suggest that primes $p$ for which a number field is not $p$-rational are rare in general, for example $\mathbb{Q}(\sqrt{5})$
is $p$-rational for all primes up to $10^{14}$. Greenberg proved an elementary criterion to check the $p$-rationality of the field $\mathbb{Q}(\sqrt{5})$
by means of congruences of Fibonacci numbers \cite[Corollary 4.5]{Greenberg}. This inspired a general phenomenon relating $p$-rationality to periods of Fibonacci numbers.

Let $d>0$ be a fundamental discriminant. Denote by $\varepsilon_d$
and $h_d$ the fundamental unit and the class number of the field
${\Q}(\sqrt{d})$ respectively, and let $\overline{\varepsilon}_d$ be the conjugate of $\varepsilon_d$. We associate to the field ${\Q}(\sqrt{d})$ a Fibonacci
sequence
$F^{(\varepsilon_d+\overline{\varepsilon}_d,-\varepsilon_d \overline{\varepsilon}_d)}=(F_n)_{n\geq0}$
defined by $F_0=0$, $F_1=1$ and the recursion formula
\begin{equation}\label{recurrence for d}
F_{n+2}=(\varepsilon_d+\overline{\varepsilon}_d)F_{n+1}-\varepsilon_d \overline{\varepsilon}_d F_{n},\; \text{for\;$n\geq0$}.
\end{equation}

Let $p\geq5$ be a prime number such that $p\nmid
(\varepsilon_{d}-\overline{\varepsilon}_{d})^2 h_d$. Then,

\begin{equation*}\label{p-rational and congruences}
{\Q}(\sqrt{d})\; \hbox{is}\; p\hbox{-rational if and only
	if}\; F_{p-(\frac{d}{p})}\not\equiv0\pmod{p^2},
\end{equation*}
where $(\frac{d}{p})$ denotes the Legendre symbol (see \cite[Theorem 3.4]{B2}).
The proof of this result uses the equivalence

\begin{equation*}\label{regulator}
{\Q}(\sqrt{d})\;\hbox{is}\;
p\hbox{-rational}\;\Leftrightarrow\; \log_{p}{(\varepsilon_d)}\not\equiv0\pmod{p^{2}},
\end{equation*}
where $\varepsilon_d$ is a fundamental unit of $K$ and $\log_p$ is
the $p$-adic logarithm.

\begin{thm}\label{periods and p-rationality}(\cite[Theorem 1.1]{B})
	Let $d>0$ be a fundamental discriminant. For every odd prime number
	$p$ such that $p\nmid
	(\varepsilon_d-\overline{\varepsilon_d})^{2}h_d$, the quadratic field ${\Q}(\sqrt{d})$ is
	$p$-rational precisely when $k(p)\neq k(p^{2})$.
\end{thm}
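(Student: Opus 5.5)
The plan is to reduce the statement to the congruence criterion quoted just above, namely that ${\Q}(\sqrt{d})$ is $p$-rational if and only if $F_{p-(\frac{d}{p})}\not\equiv 0 \pmod{p^2}$ (for $p\ge 5$; the case $p=3$ is handled by the same argument once Greenberg's characterization applies). It then remains to show that $F_{p-(\frac dp)}\equiv 0\pmod{p^2}$ is equivalent to $k(p)=k(p^2)$.

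The tool is a Binet formula. Put $\alpha=\varepsilon_d$ and $\beta=\overline{\varepsilon}_d$. Then $F_n=(\alpha^n-\beta^n)/(\alpha-\beta)$, and $\alpha\beta=\pm1$ is a unit. The hypothesis $p\nmid(\alpha-\beta)^2$ means $\alpha-\beta$ is invertible modulo $p$, so $p^j\mid F_n$ is equivalent to $\alpha^n\equiv\beta^n \pmod{p^j\mathcal O}$.

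First I express the period. The sequence with $F_0=0$, $F_1=1$ has period modulo $m$ equal to the least $n\ge1$ with $F_n\equiv 0$ and $F_{n+1}\equiv 1 \pmod m$. Equivalently, it is the order of the matrix $\begin{pmatrix}a&-N\\1&0\end{pmatrix}$ modulo $m$, where $a=\alpha+\beta$ and $N=\alpha\beta$. Since $\alpha-\beta$ is invertible, this matrix is diagonalizable over $\mathcal O/m$, so $k(m)$ is the least $n$ with $\alpha^n\equiv\beta^n\equiv1 \pmod{m\mathcal O}$. Thus $k(p)$ is the order of $\alpha$ (together with $\beta$) in $(\mathcal O/p)^\times$. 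This order divides $M:=p^2-1$, or more precisely $p-1$ or $2(p+1)$ according as $(\frac dp)=\pm1$. The kernel of the reduction $(\mathcal O/p^2)^\times\to(\mathcal O/p)^\times$ is a $p$-group of exponent $p$. Hence $k(p^2)\in\{k(p),\,p\,k(p)\}$, and $k(p^2)=k(p)$ exactly when $\alpha^{k(p)}\equiv1 \pmod{p^2}$. Because $k(p)$ is prime to $p$, this is equivalent to $\alpha^{M}\equiv 1\pmod{p^2}$ for any multiple $M$ of $k(p)$ prime to $p$. Here I take $M=p^2-1$, or better $M=2\bigl(p-(\frac dp)\bigr)$.

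Next I link this to $F_{p-(\frac dp)}$. Write $r=p-(\frac dp)$. By Frobenius, $\alpha^p\equiv\alpha$ or $\beta$ modulo $p$ according to whether $p$ splits or is inert. This gives $\alpha^r\equiv\beta^r\pmod p$, and also $\alpha^{2r}=\alpha^r\beta^r\cdot(\alpha/\beta)^r\equiv N^r\pmod p$. I then compare $\alpha^r$ and $\beta^r$ modulo $p^2$. On one side, $F_r\equiv0\pmod{p^2}$ is equivalent to $(\alpha/\beta)^r\equiv1\pmod{p^2}$. On the other side, $\alpha^{2r}=N^r(\alpha/\beta)^r$. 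Since $N=\pm1$ and $r$ is even, $N^r=1$, so $\alpha^{2r}=(\alpha/\beta)^r$. Therefore $p^2\mid F_r$ if and only if $\alpha^{2r}\equiv1\pmod{p^2}$, and the same holds for $\beta$ by conjugation. Since $k(p)$ divides $2r$, which is prime to $p$, the previous paragraph shows this is equivalent to $k(p^2)=k(p)$. Combining with the quoted criterion gives the theorem.

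The main obstacle is making the Binet and matrix-order argument rigorous modulo $p^2$ in the ramified or nonmaximal situations. This requires checking that the hypothesis $p\nmid(\varepsilon_d-\overline{\varepsilon}_d)^2$ really makes $\mathcal O/p^2\cong \Z[\alpha]/p^2$ étale over $\Z/p^2$, so that diagonalization is legitimate. It also requires the precise identification of $k(p)$ dividing $2r$ and being prime to $p$. I would verify this first by working in $\Z[\alpha]\otimes\Z_p$, which is unramified exactly because the discriminant $(\alpha-\beta)^2$ is a $p$-adic unit.
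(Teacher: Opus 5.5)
Your argument is correct for $p\ge5$ and follows the route the paper indicates. The paper gives no proof of its own; it cites the result, with the criterion ``$\Q(\sqrt d)$ is $p$-rational iff $F_{p-(\frac{d}{p})}\not\equiv 0\pmod{p^2}$'' as input. Your step showing that, with $r=p-(\frac{d}{p})$, $p^2\mid F_r$ iff $\varepsilon_d^{2r}\equiv1\pmod{p^2\mathcal{O}_K}$ iff $k(p^2)=k(p)$ is sound. It uses that $r$ is even, $\varepsilon_d\overline{\varepsilon}_d=\pm1$, $k(p)\mid 2r$ with $p\nmid 2r$, and that the kernel of $(\mathcal{O}_K/p^2)^\times\to(\mathcal{O}_K/p)^\times$ has exponent $p$. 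The obstacle you flag at the end does not arise, because no diagonalization is needed. The identities $F_{n+1}-\overline{\varepsilon}_dF_n=\varepsilon_d^{\,n}$ and $F_{n+1}-\varepsilon_dF_n=\overline{\varepsilon}_d^{\,n}$ in $\mathcal{O}_K$, the invertibility of $(\varepsilon_d-\overline{\varepsilon}_d)^2=b^2d$ modulo $p^2$, and $m\mathcal{O}_K\cap\Z=m\Z$ already give $k(m)=\min\{n\ge1:\varepsilon_d^{\,n}\equiv\overline{\varepsilon}_d^{\,n}\equiv1 \pmod{m\mathcal{O}_K}\}$ for $m=p,p^2$. Also, $p\nmid b^2d$ already rules out ramification.

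The one real gap is $p=3$. The theorem claims every odd prime, but the criterion you reduce to is quoted only for $p\ge5$, so ``the same argument'' does not reach $p=3$. You can close this, uniformly in $p$, by replacing the Fibonacci criterion with the quoted Greenberg proposition, which applies to $p=3$ because $p\nmid d$.
\begin{itemize}
\item Since $p\nmid h_d$, $p$-rationality is equivalent to $\varepsilon_d$ not being a $p$-th power in $F_v$.
\item As $F_v/\Q_p$ is unramified and $p$ is odd, $(1+p\mathcal{O}_v)^p=1+p^2\mathcal{O}_v$; also $p\nmid Nv-1$. Hence $\varepsilon_d$ is a $p$-th power in $F_v$ iff $\varepsilon_d^{\,Nv-1}\equiv1\pmod{v^2}$.
\item In the inert case $v=p\mathcal{O}_K$. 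In the split case, conjugation together with $\overline{\varepsilon}_d=\pm\varepsilon_d^{-1}$ shows the congruence modulo $v^2$ is equivalent to the one modulo $\overline{v}^{\,2}$, hence to the one modulo $p^2\mathcal{O}_K$.
\item Since $M=Nv-1$ is an even multiple of $k(p)$ prime to $p$, your own criterion ``$k(p^2)=k(p)$ iff $\varepsilon_d^{\,M}\equiv1\pmod{p^2\mathcal{O}_K}$'' finishes the proof.
\end{itemize}
This is precisely the first link ($\log_p\varepsilon_d$ versus $\varepsilon_d^{\,Nv-1}-1$) of the chain the paper sketches.
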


Let us consider the polynomials of the form $P_{d}(X)=X^2-aX+\frac{a^2-db^2}{4}$. The splitting field
of the polynomial $P_d$ is the quadratic field $\mathbb{Q}(\sqrt{d})$. The unit defined by $P_d$ is then a power of the fundamental
unit of $\mathbb{Q}(\sqrt{d})$. By Theorem 3.4 of \cite{B2}, if $k(p)= k(p^2)$ then $\mathbb{Q}(\sqrt{d})$ is necessarily not $p$-rational, under the assumption that $p\nmid(\varepsilon_d-\overline{\varepsilon_d})^{2}h_d$.
\vspace{1em}

We consider the quadratic field $\mathbb{Q}(\sqrt{5})$, for which no $WSS(5)$ primes are known to exist.
By Corollary $4.3$ of \cite{B2} we have the following elementary characterization of the $p$-rationality of
$\mathbb{Q}(\sqrt{5})$ for primes $p\neq5$. For $0<i<p$, we denote by $\frac{1}{i}$ the inverse
of $i$ modulo $p$ in $\mathbb{F}_{p}^*$. For a fundamental discriminant $d$, we set
\begin{equation*}
\alpha_{p}^{d}(i)=\sum_{\begin{array}{c}
	1\leq k\leq \frac{p-1}{2}\\
	k\equiv i\pmod{d}
	\end{array}
}\frac{1}{k}\pmod{p}.
\end{equation*}
For every prime number $p\equiv1\pmod{5}$, we have the equivalence:
\begin{equation*}
\text{$\mathbb{Q}(\sqrt{5})$ is not $p$-rational $\iff$ $\alpha_p^5(1)+2\alpha_p^5(5)\equiv\alpha_p^5(4)-\alpha_p^5(2)\pmod{p}$.}
\end{equation*}

\section{ Inverse problem} \label{sec:inv-prob}
Given a prime $p>5$ we can construct infinitely many $d$'s such that $p$ is WSS$(d)^*.$ This result is proved  in Theorem 1.1 of \cite{QS} by a different method. We need some basic results on polynomials over finite fields. Let $q$ be a prime power. The {\bf order} $e$ of a polynomial $f(x)$ of
$\F_q[x]$ is the smallest integer $e$ such that
$f(x) | x^e-1.$ (This integer is known to exist provided $f(0)\neq 0$).

{\lem \label{basic} The order of $f\in \F_q[x]$ equals the largest order of its roots in the multiplicative group of the splitting field of $f$ over $\F_q.$}

See \cite[Th. 8.27]{LN} for a proof.

We take for granted that the period of a linear recurrence over a finite field equals the period of its characteristic polynomial \cite[Th. 8.28, p.408]{LN}.
It is known that either $k(p^2)=k(p)$ or $k(p^2)=p k(p)$ \cite[Prop. 1, p. 375]{R}. Hence if we show that
$k(p^2)\le k(p)$ we can conclude that $k(p^2)=k(p).$

\subsection{Reducible cyclic codes}
\subsection{Two roots} \label{sec-order-p-1}
By Fermat, the polynomial $x^{p-1}-1$ splits into linear factors over $\F_p.$ Let $h(x)$ be the product of any two of these factors. Assume that the order of $h(x)$ is $p-1,$
or, by Lemma \ref{basic}, that one at least of its roots has order $p-1.$
The factorization $x^{p-1}-1=h(x)g(x)$ can be lifted into a factorization $x^{p-1}-1=H(x)G(x)$ over $\Z_{p^2}[x]$ with $H,G$ being congruent to $h,g$ respectively
$\pmod{p}.$ This is an immediate application of Hensel lemma \cite{McG}. Note that $H$ is also of order $p-1.$
Any lower order would lead to a contradiction with the order of $h$ by reduction modulo $p.$
Write $H(x)=x^2+ax+b.$ Consider arbitrary integers $A,B$ congruent to $a,b$ respectively $ \pmod{p^2}.$ We compute the discriminant $\Delta=A^2-4B$ of
$X^2+AX+B.$
We can choose $A,B$ such that
$\Delta>0.$
Let $d$ be the square free part of $\Delta.$ Then we claim that $p$ is $WSS(d)$ with $k(p)=k(p^2)=p-1.$

\begin{ex}{\rm
Let $p=31$ for $d=2.$ When $n=30=k(31)=k(961),$ the cyclic code $C_1$ with check polynomial $h(x)$ is a $[30, 2, 29]_{31}$ MDS code.
}
\end{ex}
\subsection{Double root}
Over $\F_p$ we have $x^p-1=(x-1)^p$ implying that
$(x-1)^2=x^2-2x+1$ divides $x^p-1.$ Since the period of
$F_{n+2}=2F_n-F_{n+1}$ divides $p$, it has to be $p.$
Now the polynomial $x^2-2x+1$ can be lifted by the Hensel lemma
to some quadratic $x^2+ax+b$ that divides $x^p-1$ over $\Z_{p^2}$ (We conjecture  based on numerical evidence that $a=-2,\,b=1.$ This fact is not needed in what follows). We can choose in infinitely many ways integers $A,B$ such that
\begin{itemize}
 \item $x^2+Ax+B \equiv x^2+ax+b \pmod{p^2}$
 \item $\Delta:=A^2-4B>0.$
\end{itemize}
Let $d$ be the square free part of $\Delta.$
Then $p$ is $WSS(d),$ since $k(p)=k(p^2)=p.$\\ \ \

\begin{ex}{\rm
 Consider the quadratic $x^2+10x+1 \in \Z[x],$ of discriminant $96=6\times 16.$ Over $\F_3$ and $\Z_9$ it reduces to $x^2+x+1,$ which divides $x^3-1.$ It can be seen that the initial
conditions $F_0=0,\,F_1=1$ of the recurrence
$$F_{n+2}=-(F_n+F_{n+1})$$
yield the sequence of period 3
$$ 0, 1, -1, 0, 1, -1, \dots$$
both over $\F_3$ and $\Z_9.$
The discriminant of $x^2+10x+1 \equiv x^2+x+1 \pmod{9}$ is
$100-4=6\times 16.$ Hence the prime $3$ is $WSS(6)^*.$
}
\end{ex}

\begin{rem}\label{triv}{\rm
 The existence of $\alpha \in \F_p^\times$ of order $p-1$
 entails the existence of $\beta $ of order $D$ for any divisor $D$ of $p-1.$ It is enough to take $\beta=\alpha^{D'}$ with $DD'=p-1.$ This yields, by the same argument as above, the existence
 of a $d$ such that $k(p)=k(p^2)=D.$
 }
\end{rem}

\subsection{Irreducible cyclic codes}\label{subsec-irr-cyc}
We need an irreducible quadratic polynomial $h(x)$ over $\F_p$ of order ${2p+2}$.
\begin{prop}
 There exists $\alpha \in \F_{p^2}\setminus \F_p$ satisfying $\alpha^{p+1}=-1.$
 \end{prop}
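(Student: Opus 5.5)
The plan is to use the cyclic structure of $\F_{p^2}^\times$, which has order $p^2-1=(p-1)(p+1)$. Let $\gamma$ be a generator and put $\alpha=\gamma^{(p-1)/2}$; this uses that $p$ is odd, as it is throughout this section. Then $\alpha^{p+1}=\gamma^{(p^2-1)/2}$. Since $\gamma$ has even order $p^2-1$, the element $\gamma^{(p^2-1)/2}$ is the unique element of order $2$ in $\F_{p^2}^\times$, namely $-1$. So $\alpha^{p+1}=-1$.

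It remains to check that $\alpha\notin\F_p$. Suppose $\alpha\in\F_p^\times$. Then $\alpha^{p-1}=1$, so $\alpha^{p+1}=\alpha^2$, and we would have $\alpha^2=-1$. On the other hand, the order of $\alpha$ is $(p^2-1)/\gcd(p^2-1,(p-1)/2)=(p^2-1)/((p-1)/2)=2(p+1)$. An element of $\F_p^\times$ has order dividing $p-1$, and $2(p+1)\nmid p-1$ because $2(p+1)>p-1$. This is a contradiction, so $\alpha\in\F_{p^2}\setminus\F_p$.

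This order computation also gives what the surrounding section needs. Since $\alpha\notin\F_p$, its minimal polynomial $h(x)=(x-\alpha)(x-\alpha^p)$ is an irreducible quadratic over $\F_p$. The conjugate $\alpha^p$ has the same order as $\alpha$, so by Lemma~\ref{basic} the order of $h$ is $2p+2$.

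There is no real obstacle. The only points needing care are identifying $\gamma^{(p^2-1)/2}$ with $-1$, which holds by uniqueness of the element of order $2$ in a cyclic group of even order, and the gcd computation showing $\alpha$ has order exactly $2(p+1)$.

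An alternative route is through the norm map $N(x)=x^{p+1}$ from $\F_{p^2}^\times$ onto $\F_p^\times$. It is surjective, so some $\alpha$ has norm $-1$. That $\alpha$ lies outside $\F_p$ because on $\F_p$ the norm is $x\mapsto x^2$, and $-1$ need not be a square. This fails when $p\equiv1\pmod 4$, so one would still need to pick $\alpha$ outside $\F_p$ inside the coset $N^{-1}(-1)$, which has $p+1>2$ elements. The explicit generator construction above avoids this case distinction, so I would use that.
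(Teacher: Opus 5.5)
Your proof is correct, but it follows a different route from the paper's.

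\textbf{The paper's route.} The paper counts roots. Since $p$ is odd, $x^{p+1}+1 \mid x^{2p+2}-1 \mid x^{p^2-1}-1$, so $x^{p+1}+1$ has $p+1$ distinct roots in $\F_{p^2}$. Since $\gcd(2p+2,p-1)\mid 4$, only roots of $x^4-1$ can lie in $\F_p$, so some root lies outside $\F_p$. As written, the paper draws this conclusion for $x^{2p+2}-1$ rather than for $x^{p+1}+1$. The count really has to be applied to the $p+1$ roots of $x^{p+1}+1$; at most two of these, namely those with $x^2=-1$, lie in $\F_p$. That is exactly the observation in your norm-map remark. The argument needs no generator of $\F_{p^2}^\times$ and shows that almost every root of $x^{p+1}+1$ works, but it gives no control over the order of $\alpha$.

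\textbf{What your construction adds.} Your explicit choice $\alpha=\gamma^{(p-1)/2}$ gives exactly that control: $\alpha$ has order precisely $2(p+1)$. This is not cosmetic. The paper's next sentence, asserting that the order of $\alpha$ is $2p+2$, does not follow from $\alpha^{p+1}=-1$ and $\alpha\notin\F_p$ alone. These conditions only force the order to divide $2(p+1)$ but not $p+1$. For instance, take $p=11$ and an element $\alpha\in\F_{121}^\times$ of order $8$. It satisfies $\alpha^{12}=\alpha^{4}=-1$, and it is not in $\F_{11}$ since $8\nmid 10$, yet its order is not $24$. So your construction is the one that actually supports the order-$2p+2$ claim used in the subsequent proposition.
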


\begin{proof}
 Such an $\alpha$ exists because $x^{p+1}+1$
divides $x^{2p+2}-1$ which divides itself
$x^{p^2-1}-1$.\\ We can assume that this $\alpha \notin \F_p,$ because the only roots common to that polynomial and to $x^{p-1}-1$ are roots of $x^4-1.$
 This can be seen from $GCD(x^{2p+2}-1,x^{p-1}-1)|x^4-1$ which results in turn from $GCD(2p+2,p-1)| 4$ . Therefore the only roots of
 $x^{2p+2}-1$ in $\F_p$ are roots of $x^4-1.$ Since $2p+2>4$  there are roots of $x^{2p+2}-1$ not in $\F_p.$
\end{proof}

This implies that the order of $\alpha$ in $\F_{p^2}^\times$ is $2p+2.$ We can now construct the $h$ we need.

\begin{prop} \label{prop-order-2p+2}
 Let $\alpha$ be a root as in Proposition 1.
 Let $h(x)=(x-\alpha)(x-\alpha^p).$ Then $h(x)$
 is an ireducible polynomial of
 $ \F_p[x]$ of order $2p+2.$
\end{prop}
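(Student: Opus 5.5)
We need to show three things: $h(x)$ has coefficients in $\F_p$, $h$ is irreducible over $\F_p$, and the order of $h$ is $2p+2$.

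\emph{Coefficients in $\F_p$.} Let $\sigma$ be the Frobenius $y\mapsto y^p$ of $\F_{p^2}/\F_p$. Since $\sigma^2$ is the identity on $\F_{p^2}$, we have $\sigma(\alpha^p)=\alpha^{p^2}=\alpha$, so $\sigma$ swaps the two roots $\alpha$ and $\alpha^p$. Hence it fixes the coefficients of $h$. Concretely, $\alpha\cdot\alpha^p=\alpha^{p+1}=-1$ and $\alpha+\alpha^p$ are both fixed by $\sigma$, so they lie in $\F_p$. This gives
\[
h(x)=x^2-(\alpha+\alpha^p)x-1 \in \F_p[x].
\]

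\emph{Irreducibility.} Since $h$ has degree two, it is reducible over $\F_p$ exactly when it has a root in $\F_p$. Its roots are $\alpha$ and $\alpha^p$. By Proposition 1, $\alpha\notin\F_p$. If $\alpha^p$ were in $\F_p$, then $\alpha=(\alpha^p)^p$ would be in $\F_p$ as well, a contradiction. So $h$ is irreducible. In particular $\alpha^p\neq\alpha$, so the two roots are distinct.

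\emph{Order.} By Lemma \ref{basic}, the order of $h$ is the largest of the multiplicative orders of $\alpha$ and $\alpha^p$. Raising to the $p$-th power is an automorphism of $\F_{p^2}^\times$, so $\alpha$ and $\alpha^p$ have the same order. It therefore suffices to show that $\alpha$ has order exactly $2p+2$.

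\emph{Main step: the order of $\alpha$.} From $\alpha^{p+1}=-1$ we get $\alpha^{2p+2}=1$, so the order $e$ of $\alpha$ divides $2p+2$. Since $\alpha^{p+1}=-1\neq 1$ ($p$ is odd), $e\nmid p+1$. Hence $v_2(e)=v_2(2p+2)$, but this alone does not force $e=2p+2$. For example, with $p=3$ and $\alpha$ a primitive $8$th root of unity in $\F_9$, we have $\alpha^4=-1$ and $\alpha\notin\F_3$, so the hypotheses hold with $e=8=2p+2$. In general, however, one could have $e=2m$ for a proper divisor $m$ of $p+1$ with $(p+1)/m$ odd.

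To close this gap, I would strengthen the choice of $\alpha$ in Proposition 1 rather than try to prove the claim for every such $\alpha$. The group $\F_{p^2}^\times$ is cyclic of order $p^2-1$, and $2p+2$ divides $p^2-1$ because $p-1$ is even. So it contains an element $\alpha$ of order exactly $2p+2$, for instance $\gamma^{(p-1)/2}$ for a generator $\gamma$. Such an $\alpha$ satisfies $\alpha^{p+1}=-1$, since $\alpha^{p+1}$ has order $2$.

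It remains to check that this $\alpha$ is not in $\F_p$. If it were, then $2p+2$ would divide $p-1$, which is impossible since $2p+2>p-1$.

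With this choice of $\alpha$, the three steps above give that $h$ is an irreducible polynomial in $\F_p[x]$ of order $2p+2$. The interpretation of Proposition 1 (and the remark after it) as asserting exact order $2p+2$ is precisely what this choice of $\alpha$ justifies.
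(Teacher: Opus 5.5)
Your three steps (Frobenius-invariance of the coefficients, irreducibility because the roots lie outside $\F_p$, and Lemma~\ref{basic} applied to two conjugate roots of equal order) are essentially the paper's proof. Where you depart from it is the order of $\alpha$, and you are right to refuse the paper's shortcut. The paper asserts, right after the existence proposition for $\alpha$, that $\alpha^{p+1}=-1$ and $\alpha\notin\F_p$ force $\alpha$ to have order $2p+2$. That claim is false, so your strengthened choice of $\alpha$ is a necessary repair, not a matter of taste.

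The smallest failure is $p=11$. Take $h(x)=x^2-3x-1\in\F_{11}[x]$. Its discriminant $13\equiv 2$ is a non-residue mod $11$, so its roots $\alpha,\alpha^{11}$ lie in $\F_{121}\setminus\F_{11}$. Modulo $h$ one has $x^4\equiv 9x^2+6x+1\equiv 33x+10\equiv -1$. Hence $\alpha^{12}=\alpha^{8}\alpha^{4}=-1$, so $\alpha$ is a root ``as in Proposition 1'', yet $h$ has order $8\neq 24$.

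You can push your own observation a bit further: $e=2^{v_2(2p+2)}t'$ with $t'$ dividing the odd part of $p+1$, and $\gcd(p-1,2p+2)=\gcd(p-1,4)$. From this, every admissible $\alpha$ has order $2p+2$ only when $p+1$ is a power of $2$, or when $p\equiv 1 \pmod 4$ and $(p+1)/2$ is prime. So the statement as literally phrased cannot be proved in general. What you prove, with $\alpha$ of exact order $2p+2$ (e.g.\ $\gamma^{(p-1)/2}$ for a generator $\gamma$ of $\F_{p^2}^\times$), is the correct version. Your check that this $\alpha$ satisfies $\alpha^{p+1}=-1$ and $\alpha\notin\F_p$ is complete.

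This choice is also what the later NMDS proposition silently relies on. There $\alpha^{p-1}$ must have order $p+1$, which fails for the order-$8$ root above, where $\alpha^{10}=\alpha^{2}$ has order $4$.

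One small point: your $p=3$ illustration is a case where the claim holds, not a counterexample. It does not support the sentence that follows it; the $p=11$ example does.
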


 \begin{proof}
  By Frobenius action it is in $\F_p[x]$ and irreducible since its roots are not in $\F_p.$
 It divides $x^{2p+2}-1$ since both $x-\alpha$
 and $x-\alpha^p$ do, and are pairwise coprime.
 The order of $h(x)$ is $2p+2$ since it is the common order of both of its roots.
 \end{proof}


 The rest of the argument is similar to the reducible case. Then we claim that $p$ is $WSS(d)^*$ with $k(p)=k(p^2)=2p+2.$

\begin{ex}{\rm
 $p=13$ for $d=2.$ When $n=28=k(13)=k(169),$ we have a non-projective $[28, 2, 24]_{13}$ cyclic code with check polynomial $h(x)$, which is the 4-fold repetition of a $[7,2,6]_{13}$ cyclic code, an MDS code. This
 is consistent with the main result of \cite{K} which claims that two-weight codes whose weights difference is not a power of the characteristic of the field are repetitive.
}
\end{ex}

\begin{rem}{\rm
 The existence of $\alpha \in \F_p^\times$ of order $2p+2$
 entails the existence of $\beta $ of order $D$ for any divisor $D$ of $2p+2$ by the same argument as in Remark 1. This yields, by the same technique as for $D=2p+2,$ the existence
 of a $d$ such that $k(p)=k(p^2)=D.$
 }
\end{rem}

\section{Comparing weight distributions}

Let $h(x) \mid x^n-1$ be the check polynomial of the free cyclic code $C_2$ of length $n$ and dimension $2$ over $\mathbb{Z}_{p^2}$, where $\gcd(n,p)=1$.
Let $C_1$ be the cyclic code of length $n$ and dimension $2$ over $\mathbb{Z}_p$ with the check polynomial $\bar{h} \equiv h \pmod{p}$, i.e., $h$ is the Hensel lift of $\bar{h}$.

It is known that the weight enumerator of an MDS code over $\mathbb{F}_q$ is unique (see, e.g., \cite[12, Ch.11, \S 3, Theorem 6]{MacSlo}), which is given by $A_0 = 1$, $A_w = 0$ for $0 < w < d$, and
\[
A_w = \binom{n}{w} (q-1) \sum_{j=0}^{w-d} (-1)^j \binom{w-1}{j} q^{w-d-j}.
\]
We require some facts on the algebraic structure of cyclic codes over $\mathbb{Z}_{p^2}.$
\begin{prop}\cite[Proposition 2]{WS-DM}
	Let $C$ be a cyclic code of length $n$ over $\mathbb{Z}_{p^2}$ with $\gcd(n,p) = 1$.
	Then there exist polynomials $f_1, f_2, f_3, \in \mathbb{Z}_{p^2}[x]$ such that $f_1 f_2 f_3 = x^n-1$ and $C = \langle f_1 f_2, pf_1 \rangle$.
	Moreover, we have $pC = \langle pf_1 \rangle$.
	In particular, if $C$ is free, then $f_2 = 1$.
\end{prop}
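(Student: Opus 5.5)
The statement is a structure result for cyclic codes over $\mathbb{Z}_{p^2}$ when $\gcd(n,p)=1$. I would prove it with the standard Hensel-lifting and CRT decomposition of $\mathbb{Z}_{p^2}[x]/(x^n-1)$.

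\textbf{Step 1 (factorization).} Since $\gcd(n,p)=1$, the polynomial $x^n-1$ is separable over $\F_p$. It therefore factors as a product of distinct monic irreducibles $\bar g_1\cdots\bar g_r$. By Hensel's lemma \cite{McG} this lifts uniquely to a factorization $x^n-1=g_1\cdots g_r$ into pairwise coprime monic basic irreducibles over $\mathbb{Z}_{p^2}$. Coprimality is meant in the sense that $(g_i)+(g_j)=(1)$, which holds because it holds modulo $p$ and $p$ is nilpotent. The Chinese remainder theorem then gives
\[
\mathbb{Z}_{p^2}[x]/(x^n-1)\cong \bigoplus_{i=1}^r \mathbb{Z}_{p^2}[x]/(g_i).
\]

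\textbf{Step 2 (ideals of the local summands).} Each $R_i=\mathbb{Z}_{p^2}[x]/(g_i)$ is a Galois ring, a local ring with maximal ideal $pR_i$. Its only ideals are $0$, $pR_i$ and $R_i$. The key claim is that any element outside $pR_i$ is a unit: its reduction is nonzero in the field $\F_p[x]/(\bar g_i)$, and a lift of the inverse corrects it up to a nilpotent. An ideal containing an element outside $pR_i$ is therefore everything. An ideal inside $pR_i\cong \F_p[x]/(\bar g_i)$ is an ideal of a field, so it is $0$ or $pR_i$.

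\textbf{Step 3 (assembling the generators).} An ideal $C$ of the direct sum is a direct sum of ideals $I_i\subseteq R_i$. Let $f_3$ be the product of the $g_i$ with $I_i=R_i$, let $f_2$ be the product of those with $I_i=pR_i$, and let $f_1$ be the product of those with $I_i=0$. Then $f_1f_2f_3=x^n-1$. I would check componentwise that $\langle f_1f_2,\,pf_1\rangle$ has the same components as $C$, using that $g_j$ is a unit in $R_i$ for $j\ne i$:
\begin{itemize}
\item at $i\mid f_3$ the generator $f_1f_2$ is a unit, giving $R_i$;
\item at $i\mid f_2$ the generator $f_1f_2$ vanishes and $pf_1$ is $p$ times a unit, giving $pR_i$;
\item at $i\mid f_1$ both generators vanish, giving $0$.
\end{itemize}
Multiplying by $p$ kills the $pR_i$ components and turns the $R_i$ components into $pR_i$. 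Hence $pC$ has nonzero components exactly at the factors of $f_2f_3$, and $pC=\langle pf_1\rangle$ by the same componentwise check.

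\textbf{Step 4 (free case).} If $C$ is free, then $|C|=p^{2k}$ where $k$ is its rank. Its reduction modulo $p$ and the submodule $pC$ both have size $p^k$. On the other hand $|pC|=p^{\deg f_3}$, while $|C/pC|=p^{\deg f_2+\deg f_3}$. Equating these forces $\deg f_2=0$, so $f_2=1$.

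The only genuinely delicate point is Step 2, the unit criterion in the Galois ring together with the coprimality of the lifts. Both follow from the nilpotence of $p$, and I expect no serious obstacle beyond that.
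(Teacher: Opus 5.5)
Your Steps 1, 2, 4 and the construction in Step 3 are sound; this is the standard Hensel/CRT argument. They give $C=\langle f_1f_2,pf_1\rangle$ with $|C|=p^{2\deg f_3+\deg f_2}$, and $f_2=1$ when $C$ is free.

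The gap is the claim $pC=\langle pf_1\rangle$ at the end of Step 3. Your own sentence ``multiplying by $p$ kills the $pR_i$ components and turns the $R_i$ components into $pR_i$'' says that $pC$ is nonzero only at the factors of $f_3$, not of $f_2f_3$. Step 4 relies on exactly this when you use $|pC|=p^{\deg f_3}$. But $f_1$ is a unit in $R_i$ for every $g_i\mid f_2f_3$, so the componentwise check gives component $pR_i$ for $\langle pf_1\rangle$ at all of these. What is true in general is
\[
pC=\langle pf_1f_2\rangle,\qquad \langle pf_1\rangle=C\cap p\mathbb{Z}_{p^2}^n=\{c\in C:\ pc=0\},
\]
and the two coincide only when $f_2=1$. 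You have conflated $pC$ with the kernel of reduction modulo $p$.

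For non-free codes the assertion is in fact false, not just unproved. Take $C=\langle p\rangle$: your construction gives $f_1=f_3=1$ and $f_2=x^n-1$, so $pC=\{0\}\neq C=\langle pf_1\rangle$. No other admissible triple helps either. Indeed, $\langle pf_1\rangle=0$ forces $x^n-1\mid\bar f_1$ in $\F_p[x]$, hence $f_2,f_3$ are units and $C=0$.

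The repair is to reorder the argument and restrict the claim to free codes:
\begin{enumerate}
\item compute $|pC|=p^{\deg f_3}$ first;
\item use your Step 4 to get $f_2=1$ when $C$ is free;
\item only then conclude $pC=\langle pf_1f_2\rangle=\langle pf_1\rangle$.
\end{enumerate}
That is all the paper needs. In case (1) of the proof of Theorem~\ref{thm-wd}, the relevant set is the kernel $C_2\cap p\mathbb{Z}_{p^2}^n$ of reduction modulo $p$. By the display above this equals $\langle pf_1\rangle$ for every such cyclic code, and it equals $pC_2$ because $C_2$ is free.
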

 We are now ready for the main result of this section.
\begin{thm} \label{thm-wd}
	Assume that $C_1$ is a two-weight MDS code whose length $n$ is at most $p+1$.
	Then
	\[ A_{n-1}(C_1) = n(p-1), \quad A_n(C_1) = (p-1)(p-n+1), \]
	and
	\[ A_{n-1}(C_2) = n(p^2-1), \quad A_n(C_2) = (p^2 - 1)(p^2 - n+1). \]
\end{thm}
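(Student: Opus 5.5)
For $C_1$ I will use that it is a two-dimensional MDS code over $\F_p$, so its minimum distance is $n-2+1=n-1$ and its only nonzero weights are $n-1$ and $n$. Since it is two-weight, the weight distribution is determined by two linear relations. The first is the count of nonzero codewords:
\[
A_{n-1}+A_n=p^2-1 .
\]
The second is the first Pless power moment. Because the code is MDS with $n\ge 3$, no coordinate is identically zero, so each coordinate takes every value of $\F_p$ equally often, and
\[
(n-1)A_{n-1}+nA_n=n(p-1)p .
\]
Solving, $A_{n-1}=n(p^2-1)-n(p-1)p=n(p-1)$ and $A_n=p^2-1-n(p-1)=(p-1)(p+1-n)$. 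As a check, this also follows from the MDS formula quoted above with $d=n-1$ and $w=n-1$, which gives $\binom{n}{n-1}(p-1)=n(p-1)$. The hypothesis $n\le p+1$ is exactly what makes $A_n\ge 0$.

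For $C_2$ I will use Proposition 2 (the structure result from \cite{WS-DM}). Since $C_2$ is free of rank $2$, it has $p^4$ codewords, and it is a Hensel-lifted cyclic code with reduction $C_2 \bmod p = C_1$. I split the nonzero codewords $c$ into two classes.

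\textbf{Codewords in $pC_2$.} Here $c=pu$, and $pC_2=\langle pf_1\rangle$ is isomorphic to $C_1$ via $u\bmod p$. The weight of $pu$ equals the weight of $u\bmod p$, so these contribute $n(p-1)$ words of weight $n-1$ and $(p-1)(p+1-n)$ of weight $n$.

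\textbf{Codewords not in $pC_2$.} Here $\bar c=c\bmod p$ is a nonzero word of $C_1$. Coordinates where $\bar c$ is nonzero are units, hence nonzero. Whenever $\bar c$ has a zero coordinate, that zero is unique, since $C_1$ has minimum distance $n-1$. The key step is to count, over a fixed $\bar c$ of weight $n-1$, how many of its $p^2$ lifts $c=\tilde c+pv$ (with $v\in C_1$) also vanish at that position $j$. The lift vanishes there iff $v_j\equiv -\tilde c_j/p$. Since $C_1$ is MDS, the projection onto coordinate $j$ is surjective, so exactly $p$ choices of $v$ satisfy this. Thus each weight-$(n-1)$ word of $C_1$ yields $p$ lifts of weight $n-1$ and $p^2-p$ lifts of weight $n$. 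Each weight-$n$ word yields $p^2$ lifts, all of weight $n$.

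Summing the two classes:
\[
A_{n-1}(C_2)=n(p-1)+p\cdot n(p-1)=n(p^2-1),
\]
and $A_n(C_2)=p^4-1-n(p^2-1)=(p^2-1)(p^2+1-n)$, which matches the statement.

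The main obstacle is justifying the lift count, namely the identification of the lifts of $\bar c$ with the coset $\tilde c+pC_2$, $pC_2\cong C_1$. This I will take from freeness and Proposition 2. The MDS surjectivity of each coordinate then finishes the count.
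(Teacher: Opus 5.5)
Your proof is correct and takes essentially the same route as the paper. Both split $C_2$ by reduction mod $p$, use freeness to get $\ker\pi=pC_2\cong C_1$, and show that each weight-$(n-1)$ word of $C_1$ has exactly $p$ lifts of weight $n-1$. The paper phrases this last step as $c_2=\lambda c_1$; your affine condition $v_j\equiv-\tilde c_j/p$, combined with surjectivity of the coordinate projection, gives the same count stated more carefully. You also obtain the distribution of $C_1$ from the first two Pless moments instead of the MDS weight-enumerator formula, which is only a cosmetic difference.
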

\begin{proof}
	Since $C_1$ is a two-weight MDS code, then $w_1 = n-1$, $w_2 = n$, and
	\[ A_{n-1}(C_1) = n(p-1), \quad A_n(C_1) = (p-1)(p-n+1). \]
	Consider the map $\pi: C_2 \rightarrow C_1$ defined as $\pi(\vec{c}) = \vec{c} \pmod{p}$ for each $\vec{c} \in C_2$.
	So $\pi$ is homomorphism.
	It is clear that for each $\vec{c} \in C_2$, it can be uniquely expressed by $\vec{c}_1 \oplus p \vec{c}_2$, where $\vec{c}_1 = \pi(\vec{c}),\vec{c}_2 \in C_1$, and $\oplus$ means the addition operation over $\mathbb{Z}_{p^2}$.
	Obviously, the Hamming weight of $\vec{c}$ is at least $\pi(\vec{c})$, i.e., $\wt(\vec{c}) \geqslant \wt(\pi(\vec{c}))$.
	Thus, we have the following three cases:
	\begin{enumerate}
		\item[(1)] If $\pi(\vec{c}) = \vec{0}$, then $\vec{c} \in pC_2 = \langle pf_1 \rangle$ since $C_2$ is free.
		Hence, there are $n(p-1)$ such $\vec{c}$ of Hamming weight $n-1$, and $(p-1)(p-n+1)$ such $\vec{c}$ of Hamming weight $n$.
		
		\item[(2)] If $\pi(\vec{c}) \neq \vec{0}$ has Hamming weight $n$, then $\vec{c}$ has Hamming weight $n$.
		And there are $(p-1)(p-n+1)$ such $\vec{c}$ of Hamming weight $n$.
		
		\item[(3)] If $\pi(\vec{c}) \neq \vec{0}$ has Hamming weight $n-1$, then $\vec{c}$ has Hamming weight at least $n-1$.
		Note that the row of the generator matrix (in standard form) of $C_2$ has Hamming weight $n-1$.
		We claim that $\vec{c}$ has Hamming weight $n-1$ if and only if $\vec{c}_2 = \lambda \vec{c}_1 \pmod{p}$ for some $\lambda \in \mathbb{Z}_p$.
		For otherwise, we have the following two cases.
		\begin{enumerate}
			\item If $\vec{c}_2$ has Hamming weight $n$, then it is easy to check that the Hamming weight of $\vec{c}_1 \oplus p \vec{c}_2$ is exactly $n$.
			
			\item If $\vec{c}_2$ has Hamming weight $n-1$ but $\vec{c}_2 \neq \vec{c}_1$ for each nonzero $\lambda \in \mathbb{Z}_p$, then the Hamming weight of $\vec{c}_1 \oplus p \vec{c}_2$ is still exactly $n$.
		\end{enumerate}
		Hence,
		\begin{align*}
			|\{ \vec{c} \in C_2: \wt(\vec{c}) = n-1 \}| & = |\{ (\vec{c}_1, \lambda \vec{c}_1): \vec{c}_1 \in C_1, \wt(\vec{c}_1) = n-1 \}| \\
			& = np(p-1).
		\end{align*}
		the number of codewords $\vec{c}$ of Hamming weight $n-1$ is $n(p-1)$.
	\end{enumerate}
	Therefore, $C_2$ is a two-Hamming-weight code, and
	\[ A_{n-1}(C_2) = n(p-1) + np(p-1) = n(p+1)(p-1) = n(p^2 - 1), \]
	which implies that
	\[ A_{n}(C_2) = p^4 - 1 - A_{n-1} = (p^2 - 1)(p^2 + 1 - n). \]
	We complete the proof.
\end{proof}

\begin{ex}{\rm
    By Magma \cite{magma}, we have found the polynomials such that $C_1$ and $C_2$ have only two Hamming weights $w_1, w_2$ $(w_1 < w_2)$.
    The parameters of such codes are listed in Table \ref{tab-two-weight}, where $a_1 = A_{w_1}(C_1)$, $a_2 = A_{w_2}(C_1)$, and $A_1 = A_{w_1}(C_2)$, $A_2 = A_{w_2}(C_2)$.
    Moreover, ``IRR'' means that the check polynomial is irreducible, and ``RED'' means that the check polynomial is reducible.

    \begin{table}[htbp]
	\caption{Cyclic $[n, 2, \{w_1,w_2\}]_p$ codes  for some WSS($d$) primes $p$ of ~\cite[Table(p. 109)]B} \label{tab-two-weight}
	\begin{tabular}{lllll}
		\toprule
		$(n,d,p)$ & $\{w_1,w_2\}$ & $(a_1,a_2)$ & $(A_1,A_2)$ & poly \\
		\toprule
		$(28,8,13)$ & \{24,28\} & (84,84) & (1176,27384) & IRR  \\
		$(30,8,31)$ & \{29,30\} & (900,60) & (28800,894720) & RED  \\
		$(104,12,103)$ & \{102,104\} & (5304,5304) & (551616,111999264) & IRR   \\
		$(484,13,241)$ & \{480,484\} & (29040,29040) & $(7027680,\approx 3\times 10^9)$ & IRR   \\
		$(8,24,7)$ & \{6,8\} & (24,24) & (192,2208) & IRR \\
		$(174,24,523)$ & \{172,174\} & (45414,228114) & $(23796936,\approx 7\times 10^{10})$ & RED  \\
		\bottomrule
	\end{tabular}
\end{table}
}
\end{ex}

\begin{coro}
	Keep the above notations.
	Then we have $A_{w_i}(C_1)$ divides $A_{w_i}(C_2)$ for $i=1,2$, when $n = p-1$ and $n = (p+1)/2$.
\end{coro}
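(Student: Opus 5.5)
The plan is to plug the two admissible lengths into the closed formulas of Theorem \ref{thm-wd} and check divisibility directly. The theorem applies, since both $n=p-1$ and $n=(p+1)/2$ are at most $p+1$, and we are assuming $C_1$ is a two-weight MDS code. In that situation $w_1=n-1$ and $w_2=n$, and the weights of $C_2$ are the same two values. Hence it suffices to compare $A_{n-1}(C_1)=n(p-1)$ with $A_{n-1}(C_2)=n(p^2-1)$, and $A_n(C_1)=(p-1)(p-n+1)$ with $A_n(C_2)=(p^2-1)(p^2-n+1)$.

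The case $i=1$ needs no restriction on $n$. We have $A_{n-1}(C_2)/A_{n-1}(C_1)=p+1$, which is an integer for every admissible $n$.

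For $i=2$ we cancel the common factor $p-1$. The question becomes whether
\[
(p-n+1)\mid (p+1)(p^2-n+1).
\]
We treat the two lengths separately.

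When $n=p-1$, we get $p-n+1=2$, and $p+1$ is even because $p$ is odd. So the divisibility holds, and the quotient is $\tfrac{p+1}{2}(p^2-p+2)$.

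When $n=(p+1)/2$, we get $p-n+1=(p+1)/2$, which divides $p+1$ with quotient $2$. The ratio is therefore $2(p^2-n+1)=2p^2-p+1$.

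The one point needing care is that $n=(p+1)/2$ must be an integer, which holds because $p$ is an odd prime. We should also confirm that Theorem \ref{thm-wd} is being applied under its hypothesis that $C_1$ is two-weight MDS, which is implicit in ``keep the above notations''.

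We expect no real obstacle, since the argument is direct arithmetic. The only subtlety is noting that the weight indices $w_1,w_2$ coincide for $C_1$ and $C_2$, so that the comparison is between counts at the same weight; this is guaranteed by the proof of Theorem \ref{thm-wd}.
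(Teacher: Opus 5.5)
Your proposal is correct and follows the paper's own argument. You substitute the formulas of Theorem~\ref{thm-wd}, note that the ratio at $w_1=n-1$ is always $p+1$, and reduce the $w_2=n$ case to $(p-n+1)\mid(p+1)(p^2-n+1)$. You then check this explicitly for $n=p-1$ (where $p-n+1=2$) and for $n=(p+1)/2$ (where $p-n+1=(p+1)/2$), and your quotients $\tfrac{p+1}{2}(p^2-p+2)$ and $2p^2-p+1$ fill in the step the paper simply calls clear.
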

\begin{proof}
	For each $n$, we always have $A_{n-1}(C_1)$ divides $A_{n-1}(C_2)$.
	As for $A_n$, it is clear that $A_{n}(C_1)$ divides $A_{n}(C_2)$ if and only if
	\[ (p-n+1) \mid (p+1)(p^2 - n + 1) \]
	which is true when $n = p-1$ and $n = (p+1)/2$.
\end{proof}

Let $C$ be a two-weight over $\mathbb{Z}_p$ with nonzero weights $w_1 < w_2$.
According to \cite{K}, if $w_2 - w_1$ is not a power of $p$, then $C$ is non-projective and $C$ is the $\ell$-fold repetition of a smaller two-weight code, where $\ell$ is the largest factor of $w_2 - w_1$ that is coprime to $p$.

\begin{coro}
	Keep the above notations.
	If $n > \ell^2$ and $C_1$ is an $\ell$-fold repetition of a smaller two-Hamming-weight code $C'_1$ which is MDS, then $C_1$ and $C_2$ have the same Hamming weights.
\end{coro}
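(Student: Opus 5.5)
Since $C_1$ is an $\ell$-fold repetition of a two-weight MDS code $C'_1$ of length $m=n/\ell$ and dimension $2$, the weights of $C_1$ are $w_1=\ell(m-1)=n-\ell$ and $w_2=\ell m=n$. The aim is to show that the nonzero weights of $C_2$ are exactly $\{n-\ell,n\}$. First I will lift the repetition structure. The $\ell$-fold repetition means that $\bar h$ divides $x^m-1$, so $C_1$ is the cyclic code of length $n$ obtained by repeating the length-$m$ cyclic code with the same check polynomial. By Hensel's lemma, the lift $h$ divides $x^m-1$ over $\mathbb{Z}_{p^2}$, because $\gcd(m,p)=1$. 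Every codeword of $C_2$ is then periodic with period $m$, so $C_2$ is the $\ell$-fold repetition of the free cyclic code $C'_2$ of length $m$ with check polynomial $h$. Its weights are therefore $\ell$ times the weights of $C'_2$.

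Next I will apply Theorem \ref{thm-wd} to the pair $(C'_1,C'_2)$. This requires $C'_1$ to be a two-weight MDS code of length $m\le p+1$. The hypothesis $n>\ell^2$ gives $m>\ell$, which I expect is used to ensure that $C'_1$ really has two weights $m-1,m$ with $m-1>0$ and is not a degenerate short code. The bound $m\le p+1$ should follow from the MDS property: a $[m,2,m-1]_p$ code has at most $p+1$ pairwise independent columns. More precisely, the MDS property forces $C'_1$ to be projective, and the number of points of $PG(1,p)$ is $p+1$. Theorem \ref{thm-wd} then says that $C'_2$ has exactly the two weights $m-1$ and $m$. Multiplying by $\ell$, the weights of $C_2$ are $n-\ell$ and $n$, the same as those of $C_1$.

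The main obstacle is justifying that the repetition structure of $C_1$ lifts to $C_2$. I would argue as follows: the order of $\bar h$ divides $m$, and the Hensel lift $H$ of $\bar h$ inside the factorization of $x^m-1$ coincides with the Hensel lift inside the factorization of $x^n-1$. This holds by the uniqueness of coprime Hensel lifts, since $\bar h$ and $(x^n-1)/\bar h$ are coprime when $\gcd(n,p)=1$. I also need to check where $n>\ell^2$ enters. If it is not needed beyond $m>\ell$ guaranteeing nondegeneracy, I will note that explicitly.
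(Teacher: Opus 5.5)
\textbf{There is a genuine gap, in your first sentence.}

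Your route is to lift the repetition to $C_2$ by Hensel's lemma and then apply Theorem \ref{thm-wd} to one block. This differs from the paper's proof, which works directly in $C_2$ with the splitting $\vec{c}=\vec{c}_1\oplus p\vec{c}_2$ block by block, and uses $n>\ell^2$ only to get $\wt(\vec{v})\le n-m<n-\ell$ for an auxiliary codeword $\vec{v}\in pC_2$.

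The gap: being an $\ell$-fold repetition is meant up to (monomial) equivalence, and for a two-dimensional cyclic code it does not give $\bar h\mid x^m-1$. Write the columns of a generator matrix of $C_1$ as $\bar g_i=N^i\bar g_0$, with $N$ the matrix of the shift. These columns span $\F_p^2$, so $\bar g_0$ is a cyclic vector for $N$. Hence two proportional columns $\bar g_s=\lambda\bar g_0$ force $N^s=\lambda I$. What the hypothesis actually yields is $\bar h\mid x^m-\zeta$ for some $\zeta\in\F_p^\times$ with $\zeta^\ell=1$, i.e.\ codewords of the form $(\vec{c}',\zeta^{\pm1}\vec{c}',\dots,\zeta^{\pm(\ell-1)}\vec{c}')$.

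In the situation the corollary is written for, $n=k(p)$ is the order of $\bar h$, so $\zeta=1$ is impossible once $\ell>1$. The paper's own $[28,2,\{24,28\}]_{13}$ code satisfies every hypothesis ($\ell=4$, $m=7$, $n=28>16$). Yet $\bar h=(x-\alpha)(x-\alpha^{13})$ has order $28$, so $\bar h\nmid x^7-1$; instead $\bar h\mid x^7-\alpha^7$ with $\alpha^7$ of order $4$ in $\F_{13}^\times$. The same happens for every 4-MDS entry in the tables. As written, your proof only covers the degenerate case in which the order of $\bar h$ divides $m$.

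\textbf{How to repair it.} Carry the twist along.
\begin{enumerate}
\item Let $\tilde\zeta\in\Z_{p^2}$ be the Teichm\"uller lift of $\zeta$, i.e.\ the unique lift with $\tilde\zeta^\ell=1$. It exists since $p\nmid\ell$; an arbitrary lift of $\zeta$ would not divide correctly.
\item Then $x^m-\tilde\zeta$ divides $x^{m\ell}-\tilde\zeta^{\ell}=x^n-1$ in $\Z_{p^2}[x]$. Your uniqueness argument for coprime Hensel factorizations of $x^n-1$ now gives $h\mid x^m-\tilde\zeta$.
\item So $C_2$ consists of the vectors $(\vec{c}',\tilde\zeta^{\pm1}\vec{c}',\dots)$, with $\vec{c}'$ ranging over a free rank-$2$ code $C_2'$ of length $m$ that reduces to $C_1'$. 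Since $\tilde\zeta$ is a unit, $\wt(\vec{c})=\ell\,\wt(\vec{c}')$.
\item However, $C_2'$ is $\tilde\zeta$-constacyclic, not cyclic, so Theorem \ref{thm-wd} cannot be quoted as stated. You need to note that its conclusion holds for any free rank-$2$ code over $\Z_{p^2}$ whose reduction is an $[m,2,m-1]_p$ code. Indeed, if $G'$ generates $C_2'$ and $u\not\equiv 0\pmod p$, then $\bar u$ is orthogonal to at most one column of $\bar G'$, so $\wt(uG')\in\{m-1,m\}$. For $u\in p\Z_{p^2}^2$ you are back in $C_1'$.
\end{enumerate}
With these additions your route is complete. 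It is cleaner than the paper's argument, and it shows that the hypothesis $n>\ell^2$ is not needed at all.
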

\begin{proof}
	Let $n = m \ell$.
	Then $C_2$ has nonzero Hamming weights $\ell(m-1) = n - \ell$ and $m \ell = n$.
	Since $C_1$ is an $\ell$-fold repetition, then by a suitable column permutation, $C_1^{(i)}$ can be MDS, where
	\[ C_1^{(i)} = (c_{(i-1)m + 1}, \dots, c_{(i-1)m + m}): (c_1, \dots, c_{(i-1)m + j}, \dots, c_{n}) \in C_1, \]
	i.e., $C_1^{(i)}$ is obtained by deleting all the coordinates of $C_1$ except for $\{(i-1)m+1, \dots, im \}$ with $1 \leqslant i \leqslant \ell$.
	
	Similar to the proof of Theorem \ref{thm-wd}, for each $\vec{c} \in C_2$, it can be uniquely expressed by $\vec{c}_1 \oplus p \vec{c}_2$, where $\vec{c}_1 = \pi(\vec{c}) \in C_1, \vec{c}_2 \in C_1$, and $\oplus$ means the addition operation over $\mathbb{Z}_{p^2}$.
	Obviously, for each nonzero $\vec{c} \in C_2$, we have
	\[ \wt(\vec{c}) \geqslant \wt(p \vec{c}) = \wt(p \vec{c}_1) = \wt(\vec{c}_1).  \]

	Next, we will show that $C_2$ do not have other nonzero Hamming weights.
	For otherwise, let $\vec{u} = \vec{u}_1 \oplus p \vec{u}_2 \in C_2$ be the nonzero vector such that $n - \ell < \wt(\vec{u}) < n$.
	Obviously, if $\wt(\vec{u}_1) \in \{ 0, n \}$ or $\wt(\vec{u}_2) \in \{ 0, n \}$, then $\wt(\vec{u}_1) = n$.
	Hence, $\wt(\vec{u}_1) = \wt(\vec{u}_2) = n - \ell$.
	Let
	\[ \vec{u}_i = (\vec{u}_i^{(1)}, \vec{u}_i^{(2)}, \dots, \vec{u}_i^{(\ell)}), i = 1,2, 1 \leqslant j \leqslant \ell, \]
	where $u_i^{(j)}$ has length $m$.
	Similar to the case $(3).(b)$ in the proof of Theorem \ref{thm-wd}, if $\vec{u}_1^{(j)} \neq \lambda \vec{u}_2^{(j)} \pmod{p}$ for all nonzero $\lambda \in \mathbb{Z}_p$ and $1 \leqslant j \leqslant \ell$, then $\wt(\vec{u}) = n$.
	Since $n - \ell < \wt(\vec{u}) < n$, then there exists $j$ such that $\vec{u}_1^{(j)} = \lambda \vec{u}_2^{(j)} \pmod{p}$.
	Considering the codeword $\vec{v} = (-\lambda p \vec{u}_1) \oplus p \vec{u}_2 \pmod{p^2}$, we have
	\[ \wt(\vec{v}) \leqslant n - m < n - \ell, \quad \text{since}~n = m \ell > \ell^2. \]
	\begin{itemize}
		\item If $\wt(\vec{v}) = 0$, then $\vec{u}_1^{(j)} = \lambda \vec{u}_2^{(j)} \pmod{p}$ for all $j$, which implies that $\wt(\vec{u}) = n - \ell$, contradiction.
		
		\item If $\wt(\vec{v}) \neq 0$, then $\vec{v}$ is a codeword in $C_2$ satisfying $p \vec{v} = \vec{0}$ and has Hamming weight $n-\ell$ or $n$, contradiction.
	\end{itemize}
	Hence, $C_2$ is also a two-weight code.
	By the first two Pless power moments \cite[\S 7.3]{HuffmanPless}, we can calculate the related weight distribution.
\end{proof}

In Tables~\ref{tab-two-weight-2},~\ref{tab-two-weight-3}, we display selected cyclic $[n, 2, w_1]_p$ codes over $\mathbb F_p$ with one nonzero weight $w_1$ or two nonzero weights $w_1$ and $w_2$ $(w_1 < w_2)$ for WSS($d$)$^*$ primes $p$ $(7 \le p \le 167)$. In the first column,
$(n, d, (\frac{\Delta_p}{p}), p)$ denotes the code length, discriminant of $H(x)$, the Legendre symbol of $\Delta_p$ mod $p$, where $\Delta_p$ is given in Section~\ref{sub-number-theory}, and prime number, respectively. Given $p$, we display one value of $d$ although there are several values as $p$ increases.
The second and third columns are the same as those in Table~\ref{tab-two-weight}.
In the fourth column, we give a check polynomial $h(x)$ over $\mathbb F_p$ of order $p-1$ or $2p+2$ and its Hensel's lift $H(x)$ over $\mathbb Z_{p^2}$.
 In the fifth column, we distinguish the codes with MDS, NMDS, or the direct sum of the 2-fold of an $[n/2, 2, w_1/2]_p$ MDS (simplex) code denoted by 2-MDS or the 4-fold of an $[n/4, 2, w_1/4]_p$ MDS (simplex) code denoted by 4-MDS.

\medskip

The computational results in Tables~\ref{tab-two-weight-2},~\ref{tab-two-weight-3} by Magma \cite{magma}, inspired us the following results.
\begin{prop}
Let $p>5$ be a prime $\equiv 3 \pmod{4}$. Then an associated cyclic code $C_1$ from Proposition~\ref{prop-order-2p+2} is an NMDS code with parameters $[2(p+1), 2, 2p]_p$ having only one-weight.
\end{prop}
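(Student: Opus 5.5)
The plan is to compute every codeword weight through a trace description of $C_1$, and then get the dual distance from the non-MDS property. Let $\alpha$ be as in Proposition 1, of order $n=2p+2$ in $\F_{p^2}^\times$, and let $h(x)=(x-\alpha)(x-\alpha^p)$ be the check polynomial from Proposition \ref{prop-order-2p+2}. Since $h$ is irreducible, the standard trace representation of an irreducible cyclic code gives
$$C_1=\{\,c(\beta)=(\mathrm{Tr}(\beta\alpha^{i}))_{i=0}^{n-1}\;:\;\beta\in\F_{p^2}\,\},$$
where $\mathrm{Tr}$ is the trace from $\F_{p^2}$ to $\F_p$. The map $\beta\mapsto c(\beta)$ is injective, because its kernel would be an $\F_p$-subspace annihilated by $n>2$ distinct powers of $\alpha$. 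Hence $\dim C_1=2$.

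Next, fix $\beta\neq 0$ and count the zero coordinates of $c(\beta)$. For $x\in\F_{p^2}^\times$ we have $\mathrm{Tr}(x)=x+x^p=0$ if and only if $x^{p-1}=-1$. So coordinate $i$ vanishes exactly when $\beta^{p-1}\gamma^{i}=-1$, where $\gamma=\alpha^{p-1}$. All of $\beta^{p-1}$, $\gamma$ and $-1$ lie in the norm-one subgroup $U=\{u:u^{p+1}=1\}$, which is cyclic of order $p+1$; note that $(-1)^{p+1}=1$ since $p+1$ is even.

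This is the step where $p\equiv 3 \pmod 4$ enters, and I expect it to be the only delicate point. The order of $\gamma$ is $(2p+2)/\gcd(p-1,2p+2)$. We have $\gcd(p-1,2p+2)=\gcd(p-1,4)$, and this equals $2$ exactly when $p\equiv 3\pmod 4$. Thus $\gamma$ has order $p+1$ and generates $U$. (When $p\equiv 1\pmod 4$ the gcd is $4$, so $\gamma$ generates only an index-$2$ subgroup of $U$, and the count could be $0$ or $4$.) As $i$ runs over $0,\dots,2p+1$, the element $\gamma^i$ runs over $U$ exactly twice. So the equation $\gamma^i=-\beta^{1-p}$, whose right-hand side lies in $U$, has exactly two solutions $i$. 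Every nonzero codeword therefore has weight $2p+2-2=2p$, and $C_1$ is a one-weight $[2p+2,2,2p]_p$ code.

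Finally, NMDS. Since $d=2p=n-\kappa$, the code $C_1$ is AMDS. For the dual $C_1^\perp$, which is a $[2p+2,2p,d^\perp]$ code, I will show $d^\perp=2=n-2p$. On one side, $d^\perp\ge 2$, because no coordinate of $C_1$ is identically zero: for each $i$, some $\beta$ gives $\mathrm{Tr}(\beta\alpha^i)\ne0$. On the other side, $d^\perp\le 2$: if $d^\perp\ge 3$, the dual would be MDS, hence $C_1$ would be MDS, contradicting $d=2p<2p+1$. Alternatively, and more concretely, $c_i$ and $c_{i+p+1}=\mathrm{Tr}(\beta\alpha^i\cdot(-1))=-c_i$ are proportional coordinates, which gives a weight-$2$ dual word. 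Hence $C_1^\perp$ is AMDS and $C_1$ is NMDS. This matches the $4$-fold-repetition example for $p=13\equiv1\pmod4$ being excluded.
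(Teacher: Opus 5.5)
Your proof is correct and has the same skeleton as the paper's. The order of $\alpha^{p-1}=\alpha^p/\alpha$ being $p+1$ forces the single weight $2p$, so $C_1$ is AMDS, and a weight-$2$ word in $C_1^\perp$ then makes the dual AMDS.

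The difference is in how the two steps are justified.
\begin{itemize}
\item For the weight, the paper cites \cite[Theorem 1]{SZS}, a general weight formula for two-dimensional cyclic codes in terms of the order of the ratio of the two roots. You instead count zero coordinates directly, using the trace representation and the cyclic norm-one group $U$.
\item For the weight-$2$ dual word, the paper uses the fact that a one-weight code is a replicated simplex code. You obtain it either from MDS duality or from $c_{i+p+1}=-c_i$, which follows from $\alpha^{p+1}=-1$.
\end{itemize}

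Your route is self-contained and makes the hypothesis visible. The paper's ``recall that $\alpha^{p-1}$ has order $p+1$'' silently needs $\gcd(p-1,2p+2)=\gcd(p-1,4)=2$, i.e.\ $p\equiv 3\pmod 4$; you make this explicit. The same count also accounts for the weights $2p\pm 2$ in the tables when $p\equiv 1\pmod 4$. You also supply the lower bound $d^\perp\ge 2$, which the paper leaves implicit. The paper's route is shorter and places the result inside a general framework.

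One harmless slip: with the paper's convention $C_1=(g)$, $g=(x^n-1)/h$, the codewords are $(\mathrm{Tr}(\beta\alpha^{-i}))_i$, not $(\mathrm{Tr}(\beta\alpha^{i}))_i$. Your code is the coordinate reversal of $C_1$ (equivalently, the code built from $\alpha^{-1}$), so the weights and the NMDS property are unaffected.
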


\begin{proof}
Recall from Section~\ref{subsec-irr-cyc} that $\frac{\alpha^p}{\alpha} =\alpha^{p-1}$ has order $p+1$. Then by~\cite[Theorem 1]{SZS}, the code $C_1$ is a one-weight code with the nonzero weight $2(p+1) - 2= 2p$.
 Hence it is AMDS. Moreover, since it is a one-weight code, it is the 2-fold of the simplex code $\mathcal S_1$ with parameters $[p+1, 2, p]_p$. Then the dual of the code $C_1$ contains a codeword of weight 2, say a vector $(1, 0, \cdots, 0~ |~ 1, 0, \cdots, 0)$. Hence, the dual of the code $C_1$ has parameters $[2(p+1), 2p, 2]_p$, which is also AMDS. Therefore, $C_1$ is NMDS.
\end{proof}
\begin{rem}{\rm
The length $2p+2$ is the largest possible for a near MDS code of dimension $2$ over $\F_p,$ by Proposition 5.1. of \cite{DL}. Such codes are called extremal NMDS codes.
}
\end{rem}

\begin{rem}{\rm
In the fourth column of Tables~\ref{tab-two-weight-2},~\ref{tab-two-weight-3}, $h(x)$ can be distinguished by two quadratic polynomials $h_{+1}(x)$ and $h_{-1}(x)$ depending on the value of
$(\frac{\Delta_p}{p})= \pm 1$ for a fixed prime $p$. We observe that $h_{-1}(0) = h_{+1}(1) -1$.

The reason is as follows.
Let $h_{-1}(x)= (x- \alpha)(x- \alpha^p)$ as in Proposition~\ref{prop-order-2p+2}, where  $\alpha \in \F_{p^2}\setminus \F_p$ satisfies $\alpha^{p+1}=-1$. Then $h_{-1}(x)= x^2 - (\alpha + \alpha^p) + \alpha \alpha^p = x^2 -(\alpha + \alpha^p)x  -1 =  x^2 -(\alpha + \alpha^p)x + (p-1)$. Thus, $h_{-1}(0)= p-1$. On the other hand, let $h_{+1}(x) = (x-1)(x-\beta)$ where $\beta \in \F_p$ has order $p-1$ by the procedure described in~\ref{sec-order-p-1}. By expanding $h_{+1}$, we have $h_{+1}(x) = x^2 - (1 + \beta)x + \beta = x^2 + (p-1 - \beta)x + \beta $. Hence $h_{+1}(1)-1 =p-1$. Therefore, we have $h_{-1}(0) = h_{+1}(1) -1$.
}
\end{rem}

    \begin{table}[htbp]
    \center
	\caption{Cyclic $[n, 2, \{w_1, w_2\}]_p$ codes for some WSS($d$)$^*$ primes $p$ ($7 \le p \le 47$)} \label{tab-two-weight-2}
\scriptsize{
	\begin{tabular}{lllll}
		\toprule
		$(n, d, (\frac{\Delta_p}{p}) ,p)$ & $\{w_1,w_2\}$  or  & $(a_1,a_2)$ or  & $h(x)$ and & MDS/NMDS or \\
		                        &  $\{w_1\}$ & $(a_1)$ & $H(x)$ & 2-MDS, 4-MDS\\
		\toprule
		$(6,85,+1,7)$ & $\{5,6\}$ & (36,12) &   $x^2 +x +5$ & MDS\\
 & & & $x^2+29x+19$  &  \\
		$(16,649,-1,7)$ & $\{14\}$ & (48) &  $x^2 +x +6$  & NMDS, 2-MDS \\
 & & & $x^2+29x+48$  &  \\
\hline
		$(10,3,+1,11)$ & $\{9,10\}$ & (100,20) &  $x^2 +4x +6$  & MDS \\
 & & & $ x^2 + 26x + 94$  &  \\
		$(24,6,-1,11)$ & $\{22\}$ & (120) &  $x^2 +2x +10$ & NMDS, 2-MDS \\
 & & & $ x^2 + 24x + 120$  &  \\
\hline



		$(16,426,+1,17)$ & $\{15,16\}$ & (256,32) &  $x^2 +10x +6$ & MDS\\
& & & $x^2 + 248x + 40$   &  \\

		$(36,193,-1,17)$ & $\{32,36\}$ & (144,144) &  $x^2 +x +16$ & 4-MDS\\
& & & $x^2 + 103x + 288$   &  \\
\hline


		$(40,43081,-1,19)$ & $\{38\}$ & (360) &  $x^2 +2x +18$ & NMDS, 2-MDS \\
& & & $x^2 + 211x + 360$ & \\
\hline

		$(22,381,+1,23)$ & $\{21,22\}$ & (484,44) &  $x^2 +11x +11$ & MDS\\
& & & $x^2 + 333x + 195$ & \\

		$(48,697,-1,23)$ & $\{46\}$ & (528) &  $x^2 +x +22$ & NMDS, 2-MDS\\
& & & $x^2 + 70x + 528$ & \\

\hline
		$(28,6821,+1,29)$ & $\{27,28\}$ & (784,56) &  $x^2 +9x +19$ & MDS\\
& & & $x^2 + 415x + 425$ & \\

		$(60,46,-1,29)$ & $\{56,60\}$ & (420,420) &  $x^2 +6x +28$ & 4-MDS\\
& & & $x^2 + 64x + 840$ & \\

\hline
		$(30,915,+1,31)$ & $\{29,30\}$ & (900,60) &  $x^2 +19x +11$ & MDS\\
& & & $x^2 + 546x + 414$ & \\

		$(64,384289,-1,31)$ & $\{62\}$ & (960) &  $x^2 +3x +30$ & NMDS, 2-MDS\\
& & & $x^2 + 623x + 960$ & \\

\hline
		$(36,7619,+1,37)$ & $\{35,36\}$ & (1296,72) &  $x^2 +23x +13$ & MDS\\
& & & $ x^2 + 874x + 494 $ & \\

		$(76,29321,-1,37)$ & $\{72,76\}$ & (684,684) &  $x^2 +x +36$ & 4-MDS\\
& & & $x^2 + 519x + 1368$ & \\

\hline
		$(40,1369205,+1,41)$ & $\{39,40\}$ & (1600,80) &  $x^2 +23x +17$ & MDS\\
& & & $x^2 + 1171x + 509$ & \\

		$(84,1523449,-1,41)$ & $\{80,84\}$ & (840,840) &  $x^2 +7x +40$ & 4-MDS\\
& & & $x^2 + 1237x + 1680$ & \\

\hline
		$(42,2022,+1,43)$ & $\{41,42\}$ & (1764,84) &  $x^2 +13x +29$ & MDS\\
& & & $x^2 + 1260x + 588$ & \\

		$(88,125563,-1,43)$ & $\{86\}$ & (1848) &  $x^2 +x +42$ & NMDS, 2-MDS \\
& & & $x^2 + 1420x + 1848$ & \\

\hline
		$(46,2085,+1,47)$ & $\{45,46\}$ & (2116,92) &  $x^2 +35x +11$ & MDS\\
& & & $x^2 + 599x + 1609$ & \\

		$(96,2554369,-1,47)$ & $\{94\}$ & (2208) &  $x^2 + 3x +46$ & NMDS, 2-MDS \\
& & & $x^2 + 1601x + 2208$ & \\

		\bottomrule
	\end{tabular}
}
\end{table}

    \begin{table}[htbp]
    \center
	\caption{Cyclic $[n, 2, \{w_1, w_2\}]_p$ codes for some WSS($d$)$^*$ primes $p$ ($53 \le p \le 97$)} \label{tab-two-weight-3}
\scriptsize{
	\begin{tabular}{lllll}
		\toprule
		$(n, d, (\frac{d}{p}), p)$ & $\{w_1,w_2\}$  or  & $(a_1,a_2)$ or  & $h(x)$ and & MDS/NMDS or \\
		                        &  $\{w_1\}$ & $(a_1)$ & $H(x)$ & 2-MDS, 4-MDS\\
		\toprule

		$(52,945867,+1,53)$ & $\{51,52\}$ & (2704,104) &  $x^2 +38x +14$ & MDS\\
& & &  $x^2 + 1946x + 862$  &\\

		$(108,200593,-1,53)$ & $\{104,108\}$ & (1404, 1404) &  $x^2 + x +52$ & 4-MDS\\
& & & $x^2 + 902x + 2808$ & \\

\hline
		$(58,1849301,+1,59)$ & $\{57,58\}$ & (3364,116) &  $x^2 +6x +52$ & MDS\\
& & &  $x^2 + 1363x + 2117$ & \\
		$(120, 5895841,-1,59)$ & $\{118\}$ & (3480) &  $x^2 + 12x +58$ & NMDS, 2-MDS \\
& & & $x^2 + 2431x + 3480$ & \\

\hline
		$(60,1210683,+1,61)$ & $\{59,60\}$ & (3600,120) &  $x^2 +6x +54$ & MDS\\
& & & $x^2 + 2202x + 1518$ & \\

		$(124,782295,-1,61)$ & $\{120, 124\}$ & (1860, 1860) &  $x^2 + 2x +60$ & 4-MDS \\
& & & $x^2 + 3540x + 3720$ & \\

\hline
		$(66,257765,+1,67)$ & $\{65,66\}$ & (4356,132) &  $x^2 +53x +13$ & MDS\\
& & & $x^2 + 1527x + 2961$ & \\

		$(136,2916193,-1,67)$ & $\{134\}$ & (4488) &  $x^2 + x +66$ & NMDS, 2-MDS \\
& & & $x^2 + 3418x + 4488$ & \\

\hline

		$(70,23218,+1,71)$ & $\{69,70\}$ & (4900,140) &  $x^2 +57x +13$ & MDS\\
& & & $x^2 + 1832x + 3208$ & \\

		$(144,2429065,-1,71)$ & $\{142\}$ & (5040) &  $x^2 + 3x +70$ & NMDS, 2-MDS \\
& & & $x^2 + 1565x + 5040$ & \\
\hline

		$(72,917,+1,73)$ & $\{71,72\}$ & (5184,144) &  $x^2 +39x +33$ & MDS\\
& & & $x^2 + 769x + 4559$ & \\

		$(148,27614737,-1,73)$ & $\{144, 148\}$ & (2664, 2664) &  $x^2 + x +72$ & 4-MDS\\
& & & $x^2 + 5257x + 5328$ & \\
\hline
		$(78,556413,+1,79)$ & $\{77,78\}$ & (6084,156) &  $x^2 +18x +60$ & MDS\\
& & & $x^2 + 3731x + 2509$ & \\

		$(160,8784985,-1,79)$ & $\{158\}$ & (6240) &  $x^2 + 5x +78$  & NMDS, 2-MDS \\
& & & $x^2 + 5930x + 6240$ & \\
\hline
		$(82,8378,+1,83)$ & $\{81,82\}$ & (6724,164) &  $x^2 +68x +14$ & MDS\\
& & & $x^2 + 400x + 6488$ & \\

		$(168,201961,-1,83)$ & $\{166\}$ & (6888) &  $x^2 + x +82$ & NMDS, 2-MDS \\
& & & $x^2 + 914x + 6888$ & \\
\hline
		$(88,4121103,+1,89)$ & $\{87,88\}$ & (7744,176) &  $x^2 +57x +31$ & MDS\\
& & & $x^2 + 4062x + 3858$ & \\

		$(180,2638645,-1,89)$ & $\{176,180\}$ & (3960, 3960) &  $x^2 + 3x +88$ & 4-MDS\\
& & & $x^2 + 6500x + 7920$ & \\
\hline
		$(96,28355,+1,97)$ & $\{95,96\}$ & (9216,192) &  $x^2 +67x +29$ & MDS\\
& & & $x^2 + 4044x + 5364$ & \\

		$(196,18186729,-1,97)$ & $\{192, 196\}$ & (4704, 4704) &  $x^2 + x +96$ & 4-MDS\\
& & & $x^2 + 4269x + 9408$ & \\
\bottomrule

	\end{tabular}
}
\end{table}

In what follows, we describe the weight distribution of a cyclic code over  $\mathbb{Z}_{p^2}$.

Let $C = C(a,b)$ be the cyclic code of length $n$ over $\mathbb{Z}_{p^2}$ of check polynomial $h(x) = \dfrac{(1-ax)(1-bx)}{ab}$, where $n \mid p(p-1)$.
If $a \equiv b \pmod{p}$, then $C(a,b)$ has size $p^3$ by the proof of \cite[Theorem 2]{SHS},
which implies that $C$ is not free, contradiction.
Thus, we assume that $a \not\equiv b \pmod{p}$.
\begin{prop}\cite[Theorem 1]{SHS}
	Assume that $a \not\equiv b \pmod{p}$ and $e = \mathrm{ord}_p (b/a) = \mathrm{ord}_{p^2} (b/a)$.
	Then the code $C(a,b)$ has nonzero Hamming weights $\{n - \frac{n}{e}, n\}$, and their respective frequencies are
	\[ A_{n-\frac{n}{e}}(C(a,b)) = e(p^2 - 1), \quad A_n(C(a,b)) = (p^2 - 1)(p^2 - e + 1). \]
\end{prop}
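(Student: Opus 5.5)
The plan is to use the local Binet-type description of the code. Since $a \not\equiv b \pmod p$, the two roots of the characteristic polynomial are distinct modulo $p$, so their difference is a unit in $\mathbb{Z}_{p^2}$. The reciprocal roots $a,b$ are units of $\mathbb{Z}_{p^2}$ with $a^n=b^n=1$. This means every codeword of $C(a,b)$ can be written uniquely as
\[ c(\lambda,\mu) = (\lambda a^i + \mu b^i)_{i=0}^{n-1}, \qquad \lambda,\mu \in \mathbb{Z}_{p^2}. \]
To justify this, I would note that the map $(\lambda,\mu)\mapsto c(\lambda,\mu)$ is $\mathbb{Z}_{p^2}$-linear and injective. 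Injectivity holds because the $2\times 2$ Vandermonde block on coordinates $0,1$ has determinant $b-a$, which is a unit. The image lies in the code annihilated by $h$ and has size $p^4 = |C|$, so it is all of $C$.

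Next I would count zeros. The $i$-th coordinate vanishes iff $\lambda a^i = -\mu b^i$. If $\lambda=\mu=0$ the codeword is zero. For a nonzero codeword I would split into cases according to the $p$-adic valuations of $\lambda$ and $\mu$.

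\textbf{Case 1: $v(\lambda)\neq v(\mu)$.} Say one of them is a unit and the other is not, or one is $0$ and the other is not. Then $\lambda a^i+\mu b^i$ has the smaller valuation, so it is never $0$ and the weight is $n$.

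\textbf{Case 2: $v(\lambda)=v(\mu)$ and both are units.} Then coordinate $i$ vanishes iff $(b/a)^i = -\lambda/\mu$ in $\mathbb{Z}_{p^2}^\times$.

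\textbf{Case 3: $v(\lambda)=v(\mu)=1$.} Write $\lambda=p\lambda'$, $\mu=p\mu'$. Then the condition becomes $(b/a)^i \equiv -\lambda'/\mu' \pmod p$.

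Let $\theta=b/a$. In Case 2, the set $\{i: \theta^i=t\}$ within $0\le i<n$ is either empty or a coset of $e\mathbb{Z}$, where $e=\mathrm{ord}_{p^2}(\theta)$. Since $e\mid n$ (because $\theta^n=1$), that coset has exactly $n/e$ elements. Case 3 is the same with $e=\mathrm{ord}_p(\theta)$. The hypothesis $\mathrm{ord}_p=\mathrm{ord}_{p^2}=e$ is exactly what makes both cases give the same number $n/e$, so the only nonzero weights are $n-n/e$ and $n$.

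For the frequencies I would count, in Case 2, the pairs of units $(\lambda,\mu)$ with $-\lambda/\mu \in \langle\theta\rangle \subset \mathbb{Z}_{p^2}^\times$. There are $|\mathbb{Z}_{p^2}^\times|\cdot e = p(p-1)e$ such pairs. In Case 3, $\lambda',\mu'\in\mathbb{F}_p^\times$ with $-\lambda'/\mu'\in\langle\bar\theta\rangle$, giving $(p-1)e$ pairs. The total is $e(p-1)(p+1)=e(p^2-1)$, and then $A_n = p^4-1-e(p^2-1) = (p^2-1)(p^2-e+1)$.

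I expect the main care to be needed in the first step, establishing the Binet parametrization over the ring. That step is where $a\not\equiv b$ is essential. One also has to confirm that $C$ is the full module annihilated by $h$ and is free of rank 2, so that $|C|=p^4$; this is consistent with the paper's remark on freeness. The remaining counting is routine once $e\mid n$ is noted.
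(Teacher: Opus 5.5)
Your proposal is correct. The Binet parametrization is sound: $b-a$ is a unit, and $a^n=b^n=1$ follows from $h\mid x^n-1$. The valuation case split is complete, and the count $e\,p(p-1)+e(p-1)=e(p^2-1)$ is right.

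The paper gives no proof of its own here; it simply quotes \cite[Theorem 1]{SHS}. It also states in the introduction that the method of \cite{SHS} is a local Binet-like formula, so your route is essentially the intended one.
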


\begin{coro}
	Assume that $a \not\equiv b \pmod{p}$ and $e = \mathrm{ord}_p (b/a) = \mathrm{ord}_{p^2} (b/a)$.
	Let $C_2 = C(a,b)$ and $C_1$ be the cyclic code over $\mathbb{F}_p$ with the check polynomial $\bar{h} \equiv h \pmod{p}$.
	Then $C_1$ has nonzero Hamming weights $\{n - \frac{n}{e}, n\}$, and their respective frequencies are
	\[ A_{n-\frac{n}{e}}(C_1) = e(p - 1), \quad A_n(C_1) = (p - 1)(p - e + 1). \]
	Moreover, $A_{w_i}(C_1)$ divides $A_{w_i}(C_2)$ for $i=1,2$, if and only if
	\[ (p-e+1) \mid (p+1)(p^2 - e + 1). \]
\end{coro}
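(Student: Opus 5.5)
The plan is to obtain the weight distribution of $C_1$ by reducing the trace description of $C(a,b)$ modulo $p$, and then to compare the resulting counts with those given by the preceding Proposition for $C_2=C(a,b)$.

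\emph{Step 1: a trace-free description of $C_1$.} Let $\bar a,\bar b$ be the reductions of $a,b$ modulo $p$. Since $a\not\equiv b \pmod p$, these are distinct elements of $\F_p^\times$. Both have order dividing $n$, because $h\mid x^n-1$. The check polynomial $\bar h$ then has the two distinct roots $\bar a^{-1},\bar b^{-1}$. The standard description of a cyclic code with a given check polynomial therefore gives
\[ C_1=\{ (u\bar a^{i}+v\bar b^{i})_{i=0}^{n-1} \mid u,v\in\F_p\}, \]
and this code has dimension $2$.

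\emph{Step 2: the weights.} For $uv\neq 0$, the coordinate $i$ vanishes if and only if $(\bar b/\bar a)^i=-u/v$. Put $e=\mathrm{ord}_p(b/a)$; note $e\mid n$. The condition has either no solution $i$ in $\{0,\dots,n-1\}$, or exactly $n/e$ solutions. It has $n/e$ solutions precisely when $-u/v$ lies in the subgroup $\langle \bar b/\bar a\rangle$, which has size $e$. If instead $u=0$ or $v=0$ (but not both), the codeword has full weight $n$.

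\emph{Step 3: the frequencies.} The pairs $(u,v)$ with $uv\ne0$ and $-u/v\in\langle \bar b/\bar a\rangle$ number $(p-1)e$. So
\[ A_{n-n/e}(C_1)=e(p-1), \qquad A_n(C_1)=p^2-1-e(p-1)=(p-1)(p-e+1). \]
This exactly mirrors the argument of \cite[Theorem 1]{SHS}, with $\Z_{p^2}$ replaced by $\F_p$.

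\emph{Step 4: divisibility.} By the preceding Proposition,
\[ A_{n-n/e}(C_2)=e(p^2-1)=e(p-1)(p+1), \]
so the quotient by $A_{n-n/e}(C_1)$ is $p+1$ and the first divisibility always holds. For the second weight,
\[ \frac{A_n(C_2)}{A_n(C_1)}=\frac{(p^2-1)(p^2-e+1)}{(p-1)(p-e+1)}=\frac{(p+1)(p^2-e+1)}{p-e+1}. \]
This is an integer if and only if $(p-e+1)\mid(p+1)(p^2-e+1)$, which is the stated criterion.

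\emph{Main obstacle.} The only delicate point is to make sure the two weights are the same integers for $C_1$ and for $C_2$. This holds because the hypothesis identifies $\mathrm{ord}_p(b/a)$ with $\mathrm{ord}_{p^2}(b/a)$. I would also check the degenerate case $e=1$, which is excluded since $a\not\equiv b \pmod p$.
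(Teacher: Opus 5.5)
Your proof is correct, but it takes a different route from the paper's.

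The paper never computes $C_1$ directly. It carries the weight set over from $C_2$, so the nonzero weights of $C_1$ lie in $\{n-\frac{n}{e},n\}$. This uses $C_1\cong pC_2\subseteq C_2$, which relies on $C_2$ being free; the paper leaves that step implicit. It then gets the frequencies by solving the first two Pless power moments,
\[ A_{n-\frac{n}{e}}(C_1)+A_n(C_1)=p^2-1, \qquad \Bigl(n-\frac{n}{e}\Bigr)A_{n-\frac{n}{e}}(C_1)+nA_n(C_1)=pn(p-1). \]

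You instead work entirely over $\F_p$ with the Binet-type description $c_i=u\bar a^{i}+v\bar b^{i}$. You count the solutions of $(\bar b/\bar a)^i=-u/v$, which amounts to redoing the argument of \cite[Theorem 1]{SHS} over the field.

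The paper's route is shorter once the preceding Proposition is available. However, it depends on that Proposition, and hence on the hypothesis $\mathrm{ord}_p(b/a)=\mathrm{ord}_{p^2}(b/a)$, even for the statement about $C_1$ alone. It also simply asserts that $C_1$ is two-weight.

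Your route is self-contained and brings out three points:
\begin{itemize}
\item The distribution of $C_1$ needs only $e=\mathrm{ord}_p(\bar b/\bar a)$.
\item The $p^2$-order hypothesis enters only in Step 4, when you compare with $C_2$.
\item Both weights genuinely occur, since $2\le e\le p-1$ gives positive frequencies.
\end{itemize}

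In Step 4 you also state the second divisibility correctly. The paper's proof has a slip there, writing $A_{n-\frac{n}{e}}$ where $A_n$ is meant.
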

\begin{proof}
	Since $C_2$ is two-Hamming-weight, then $C_1$ is two-Hamming weight and has nonzero weights $\{n - \frac{n}{e}, n\}$.
	The frequencies	of the weights can then be computed by the first two Pless power moments \cite[\S 7.3]{HuffmanPless}.
	They are the solutions of the system
	\[
	A_{n-\frac{n}{e}}(C_1) + A_n(C_1) = p^2 - 1, \quad
	\left( n - \frac{n}{e} \right) A_{n-\frac{n}{e}}(C_1) + n A_n(C_1) = pn(p-1).
	\]
	Obviously, $A_{n-\frac{n}{e}}(C_1) \mid A_{n-\frac{n}{e}}(C_2)$ always holds, while $A_{n-\frac{n}{e}}(C_1) \mid A_{n-\frac{n}{e}}(C_2)$ is true exactly when $(p-e+1) \mid (p+1)(p^2 - e + 1)$.
\end{proof}

\begin{rem}{\rm
	The code $C_2$ is projective if and only if $\mathrm{ord}_{p^2} (b/a) \geqslant n$.
	If $e = n$, then $C_2$ is MRD in the sense of \cite[Chap. 12]{SAS}, and the related $C_1$ is MDS.
}
\end{rem}

\section{Conclusion and open problem}
In this paper we have considered a family of integral second-order linear recurrences parametrized by a an integer $d>0.$
When this recurrence admits the same period modulo $p$ and $p^2$ for some prime $p,$ the quadratic field $\Q(\sqrt{d})$
is not $p$-rational, a remarkable property from an algebraic number theory standpoint. We have, for these special primes
considered the two cyclic codes over $\F_p$ and $\Z_{p^2}$
and compared their respective weight distributions. It would be interesting to use these codes to obtain relations and necessary existence conditions between $p$ and $d.$ This could have a significant impact on algebraic number theory.\\

\end{document}